\documentclass[12pt]{amsart}
\usepackage{xcolor}
\usepackage{mdwlist}
\usepackage{float}
\allowdisplaybreaks
\usepackage{longtable}
\usepackage{xcolor} % For color support
\usepackage{listings} % For code listings
\usepackage{amsmath} % For math environments
\usepackage{hyperref} % For clickable links

\definecolor{codegreen}{rgb}{0,0.6,0}
\definecolor{codegray}{rgb}{0.5,0.5,0.5}
\definecolor{codepurple}{rgb}{0.58,0,0.82}
\definecolor{backcolour}{rgb}{0.95,0.95,0.92}

\lstdefinestyle{sage}{
    backgroundcolor=\color{white},
    commentstyle=\color{codegreen},
    keywordstyle=\color{blue},
    numberstyle=\tiny\color{codegray},
    stringstyle=\color{codepurple},
    basicstyle=\footnotesize\ttfamily,
    breakatwhitespace=false,
    breaklines=true,
    captionpos=b,
    keepspaces=true,
    numbers=left,
    numbersep=5pt,
    showspaces=false,
    showstringspaces=false,
    showtabs=false,
    tabsize=2,
    frame=single,  % Add a box around the code
    rulecolor=\color{black}  % Color of the box
}

\lstdefinestyle{bw_sage}{
    language=Python,
    style=sage,
    basicstyle=\ttfamily\footnotesize,  % Small monospaced font (same as Sage style)
    keywordstyle=\color{black},         % Black color for keywords
    commentstyle=\color{black},         % Black color for comments
    stringstyle=\color{black},          % Black color for strings
    identifierstyle=\color{black},      % Black for variables/functions
    numberstyle=\color{black},          % Black for line numbers
    breaklines=true,                    % Allow line breaks
    frame=single,                        % Single box around the code
    captionpos=b                         % Caption position below
}

\usepackage{makecell}
\usepackage{ltablex}
\usepackage{amsfonts,amssymb,amsmath,textcomp}
\usepackage{enumitem}

\usepackage{tikz}
\usetikzlibrary{decorations.pathmorphing}
\usepackage{enumitem}

\usepackage[justification=centering]{caption}
\usepackage{supertabular}
 2   

\newcommand{\Q}{\mathbb{Q}}

\newcommand{\Z}{\mathbb{Z}}
\newcommand{\Po}{\operatorname{Po}}
\newcommand{\Cl}{\operatorname{Cl}}
\newcommand{\Gal}{\operatorname{Gal}}

\theoremstyle{plain}
\newtheorem{thm}{Theorem}[section]

\newtheorem{lem}[thm]{Lemma}
\newtheorem{prop}[thm]{Proposition}
\newtheorem{cor}[thm]{Corollary}
\newcommand{\thmref}[1]{Theorem~\ref{#1}}
\newcommand{\lemref}[1]{Lemma~\ref{#1}}
\newcommand{\propref}[1]{Proposition~\ref{#1}}
\newcommand{\corref}[1]{Corollary~\ref{#1}}
\theoremstyle{definition}
\newtheorem{defn}[thm]{Definition}

\numberwithin{equation}{section}
\begin{document}
\newcommand{\suchthat}{\;\ifnum\currentgrouptype=16 \middle\fi|\;}
\newcommand{\dmid}{\mathrel\Vert}
\title
[Rank of P\'olya Groups in Lecacheux Parametric Family]{Rank of P\'olya Groups in Lecacheux Parametric Family of Quintic Fields}
\author{Nimish Kumar Mahapatra}
\author{Prem Prakash Pandey}
\address[Nimish Kumar Mahapatra]{Indian Institute of Science Education and Research, Thiruvananthapuram, India.}
\email{nimish@iisertvm.ac.in}

\address[Prem Prakash Pandey]{Indian Institute of Science Education and Research, Berhampur, India.}
\email{premp@iiserbpr.ac.in}

\subjclass[2020]{11R29; 11R09}

\date{\today}

\keywords{P\'olya group; P\'olya field; Hilbert class field; Genus field}

\begin{abstract}
In this article, we study the P\'olya group of a new family of quintic fields, namely Lecacheux quintic fields. We show that the associated P\'olya groups can be arbitrarily large elementary abelian \(5\)-groups. Using density arguments, we prove that for every positive integer $k$, the set of odd integers $s$ such that the $5-$rank of the P\'olya group of the corresponding Lecacheux quintic field is at least $k$ has a positive density. Combining this with a result of Golod and Shafarevich, we see that for a positive proportion of $s$, the corresponding Lecacheux quintic fields admit an infinte $5-$class field tower. We also establish an upper bound for the P\'olya numbers of these fields in terms of the orders of their corresponding P\'olya groups. In addition, we prove that several fields in this family are non-monogenic despite having index one.

\end{abstract}

\maketitle{}

\section{Introduction}
Let \( K \) be an algebraic number field with ring of integers \( \mathcal{O}_K \), and let \( \text{Cl}(K) \) denote its class group. The polynomial ring \( K[X] \) plays a fundamental role in both algebraic geometry and algebraic number theory. A key object of study, in this context, is the set of polynomials in \( K[X] \) that map every element of \( \mathcal{O}_K \) back into \( \mathcal{O}_K \). This set forms a subring of \( K[X] \), known as the ring of integer-valued polynomials on \( \mathcal{O}_K \), and is denoted by \( \text{Int}(\mathcal{O}_K) \). It is defined as
\[ \text{Int}(\mathcal{O}_K) = \{ f \in K[X] \mid f(\mathcal{O}_K) \subseteq \mathcal{O}_K \}. \] 
It is evident that \( \mathcal{O}_K[X] \subseteq \text{Int}(\mathcal{O}_K) \subseteq K[X] \). Moreover, \( \text{Int}(\mathcal{O}_K) \) naturally admits a \( \mathcal{O}_K \)-module structure given by
\(
(\alpha, f(X)) \mapsto \alpha f(X),\text{ for } \alpha \in \mathcal{O}_K \text{ and } f(X) \in \text{Int}(\mathcal{O}_K)
\). The field $K$ is said to be a P\'olya field if $\text{Int}(\mathcal{O}_K)$ admits a regular basis. For each $n\in \mathbb{N}$, the leading coefficients of the polynomials in $\text{Int}(\mathcal{O}_K)$ of degree $n$ together with zero form a fractional ideal of $\mathcal{O}_K$ known as the characteristic ideal of index $n$ of $\mathcal{O}_K$, and it is denoted by $\mathfrak{J}_n(K)$ \cite[Proposition I.3.1]{PJC}. The characteristic ideals are given by 
\begin{equation*}
 \mathfrak{J}_n(K)=\{\text{leading coefficients of elements of } \text{Int}(\mathcal{O}_K) \text{ of degree } n\}\cup\{0\}.
\end{equation*}
For each integer $n\geq1$, let $[\mathfrak{J}_n(K)]$ be the ideal class in $\text{Cl}(K)$ corresponding to the fractional ideal $\mathfrak{J}_n(K)$. The P\'olya group $\Po(K)$ of $K$ is defined to be the subgroup of $\text{Cl}(K)$, generated by the classes $[\mathfrak{J}_n(K)]$.
A number field \( K \) is a P\'olya field if and only if \( \mathfrak{J}_n(K) \) is principal for all integers \( n \geq 1 \) \cite[Proposition II.1.4]{ZAN}. As $\text{Cl}(K)$ measures the failure of unique factorization in $\mathcal{O}_K$, the subgroup $\Po(K)$ measures the extent to which $K$ deviates from being a P\'olya field.

Classifying P\'olya fields of small degree has been an area of significant interest (see, for example, \cite{BAH1}, \cite{BAH2}, \cite{AL2}, \cite{ZAN}). Zantema \cite{ZAN} provided a complete characterization of quadratic P\'olya fields, while Leriche \cite{AL2} fully classified cyclic cubic P\'olya fields. In the same work, she also classified cyclic quartic and cyclic sextic P\'olya fields and extended similar classifications to certain families of bi-quadratic and sextic fields (see \cite[Theorem 5.1, Theorem 6.2]{AL2}). More recently, several attempts have been made to identify families of P\'olya (or non-P\'olya) fields in the remaining cases of bi-quadratic extensions (see \cite{BAH1}, \cite{BAH2}, \cite{AMA}, \cite{CHA}). In our recent work \cite{NKM}, we characterized the P\'olya-ness of a subfamily of Lehmer quintic fields and showed that these fields have large P\'olya groups.

In this article, we study P\'olya groups of a new family of quintic fields arising from Lecacheux parametric quintic polynomials. Particularly, we try to understand the distribution of the ranks of the P\'olya groups for this family. We believe that this article is the first instance where distribution of ranks of P\'olya groups in some family is studied. Odile Lecacheux \cite{LEC} introduced these parametric polynomials. For each \( s \in \mathbb{Q} \), the Lecacheux quintic polynomial \( f_s(x) \in \mathbb{Q}[X] \) is defined as follows:
\begin{equation}
f_s(X)=X^5+a_4(s)X^4+a_3(s)X^3+a_2(s)X^2+a_1(s)X+a_0(s),    
\end{equation}
where 
\begin{align*}
    a_4(s) &= s^5-3,\\
    a_3(s) &= -s^9-2s^8-3s^7-5s^6-6s^5-2s^4+s^3-s^2+3,\\
    a_2(s) &= s^{10}+2s^9+4s^8+6s^7+10s^6+9s^5+4s^4-2s^3+2s^2-1,\\
    a_1(s) &=-s^2(s^7+2s^6+3s^5+5s^4+5s^3+2s^2-s+1),\\
    a_0(s) &=s^5.
\end{align*}
In \cite{ALK} it is shown that if \( f_s(X) \) is irreducible in \( \mathbb{Q}[X] \), then the field \( K_s = \mathbb{Q}(\theta_s) \), where \( \theta_s \) is a root of \( f_s(X) \), is a cyclic quintic field. These fields are known as Lecacheux quintic fields. The main result proved in the article is the following:
\begin{thm}\label{NEW1}
For every positive integer $k\ge1$, the set of odd integers $s$ satisfying
\[
(\Z/5\Z)^k
\subseteq
\Po(K_s)
\]
has a positive density.
\end{thm}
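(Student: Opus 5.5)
The plan is to use the standard description of the Pólya group of a Galois field (Zantema, cf. \cite{ZAN}). For $K/\Q$ cyclic of degree $5$, $\Po(K)$ is generated by the classes of the ambiguous ideals $\Pi_p=\prod_{\mathfrak{P}\mid p}\mathfrak{P}$ for the ramified primes $p$. The order of $\Po(K)$ is controlled by the exact sequence
\[
0\to H^1(G,\mathcal{O}_K^\times)\to \bigoplus_{p \text{ ram}} \Z/e_p\Z \to \Po(K)\to 0 ,
\]
where $G=\Gal(K/\Q)$. For cyclic $G$ of order $5$, $H^1(G,\mathcal{O}_K^\times)\cong \hat H^{-1}$ has order dividing $5$ by Herbrand quotient considerations: the unit group has Herbrand quotient $1/5$ and $\hat H^0(G,\mathcal{O}_K^\times)=\{\pm1\}/N(\mathcal{O}_K^\times)$ is trivial because $5$ is odd. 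So $|H^1|=5$. If $t$ primes ramify in $K_s$, each with $e_p=5$, then $\Po(K_s)\cong(\Z/5\Z)^{t-1}$. Hence it suffices to show that for each $k$, the set of odd $s$ for which $K_s$ has at least $k+1$ ramified primes has positive density.

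The next step is to determine the ramified primes of $K_s$ in terms of $s$. The discriminant of $f_s$ factors as a square-type expression in a polynomial $P(s)$. For Lecacheux's family I expect $P(s)=s^4+5s^3+15s^2+25s+25$ (the norm form attached to the conductor), up to small powers of $s$ and of $5$. I would first prove, or quote from \cite{ALK}, that a prime $p\neq 5$ ramifies in $K_s$ exactly when $v_p(P(s))\not\equiv 0 \pmod 5$. This can be done via the Kummer-type description of the family or a Newton polygon/Dedekind criterion computation for $f_s$. It follows that every prime $p\nmid 5s$ dividing $P(s)$ exactly once is totally ramified. For odd $s$, the parity condition presumably ensures irreducibility of $f_s$ and handles the prime $2$.

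For the density statement, fix $k$ and choose $k+1$ distinct primes $p_1,\dots,p_{k+1}\equiv 1\pmod 5$. These are exactly the primes $p\neq 5$ that can divide $P(s)$, since $P$ is the norm form of $\Q(\zeta_5)$ up to a substitution. For each $i$, pick a root $r_i$ of $P$ modulo $p_i$ that is a simple root with $P(r_i+p_i t)\not\equiv P(r_i)\pmod{p_i^2}$ for suitable $t$. By Hensel's lemma there is a residue class $s\equiv c_i \pmod{p_i^2}$ with $p_i\dmid P(s)$. Combine these classes by CRT, together with $s$ odd and $s\not\equiv 0\pmod{p_i}$. This gives an arithmetic progression modulo $N=2\prod p_i^2$, with density $1/N>0$, in which every $s$ has at least $k+1$ ramified primes. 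Hence $\operatorname{rk}_5\Po(K_s)\ge k$ on that progression. The finitely many $s$ for which $f_s$ is reducible do not affect density.

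The main obstacle is the precise ramification criterion: showing that every prime $p\ne 5$ with $p\dmid P(s)$, $p\nmid s$ genuinely ramifies, not merely divides the polynomial discriminant with $p$ dividing the index $[\mathcal{O}_{K_s}:\Z[\theta_s]]$. I would handle this by reducing $f_s$ modulo $p$ at such $s$, showing it becomes $(X-a)^5$, and applying the Eisenstein/Dedekind criterion to $f_s(X+a)$ to conclude total ramification. Confirming the exact form of the discriminant of $f_s$ is the computation I would check first.
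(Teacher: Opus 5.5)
\textbf{Overall.} Your strategy works and is genuinely different from the paper's, but the polynomial you expect to control ramification is the wrong one. The polynomial $s^4+5s^3+15s^2+25s+25$ belongs to Emma Lehmer's quintics. For the Lecacheux family, $\operatorname{disc}(f_s)=s^{18}E_1(s,1)^4E_2(s,1)^8$. The ramification input you need is the conductor formula of \cite[Theorem 3]{ALK} (Theorem~\ref{cond}): a prime $q\neq 5$ ramifies in $K_s$ if and only if $q\equiv 1\pmod 5$ and $v_q(E_1E_2)\not\equiv 0\pmod 5$.

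\textbf{What fails as written.} With your $P$, the CRT step would select primes that have nothing to do with $K_s$. Your fallback, proving ramification by making some shift $f_s(X+a)$ Eisenstein, also fails for primes $p\dmid E_2(s,1)$. Such a $p$ is totally ramified with $v_p(d_{K_s})=4$, but $v_p(\operatorname{disc} f_s)=8$, so $p$ divides $[\mathcal{O}_{K_s}:\Z[\theta_s]]$ and no shift of $f_s$ is Eisenstein at $p$. For example, with $s=1$ and $p=11$ one has $f_1\equiv (X-7)^5\pmod{11}$, yet $121\mid f_1(7+11t)$ for every $t$.

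\textbf{The repair.} Quote \cite{ALK}, and run your CRT argument with $E_1(s,1)=N_{\Q(\zeta_5)/\Q}(s+\zeta_5+\zeta_5^3)$ alone.
\begin{itemize}
\item Its roots $-(\zeta_5^j+\zeta_5^{3j})$ lie in $\Z[\zeta_5]$ and its discriminant is $125$. So it has four simple roots modulo every prime $p\equiv 1\pmod 5$, which justifies the existence of your $r_i$.
\item Since $E_1-E_2=-5s(s^2+1)$, one checks that $E_1(s,1)$ and $E_2(s,1)$ share no prime factor other than $5$. Hence $p_i\dmid E_1(s,1)$ forces $v_{p_i}(E_1E_2)=1$, so $p_i$ ramifies.
\end{itemize}

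\textbf{Comparison with the paper.} With this correction your route is more elementary than the paper's. The paper proves the exact formula $\Po(K_s)\cong(\Z/5\Z)^{\omega(E_1E_2)-1}$ under the hypothesis that $E_1(s,1)$ and $E_2(s,1)$ are cube-free; this hypothesis is what guarantees that every prime factor of $E_1E_2$ ramifies. It then uses Hooley's theorem on cube-free values of quartics, with an explicit bound on the Euler products, to get a set $\mathcal C$ of odd $s$ of positive lower density. Finally it uses the normal order of $\omega(E_1(n))$ to get $\omega\ge k+1$ on a density-one set.

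You need only a lower bound on the number $t$ of ramified primes. Zantema's exact sequence together with the Herbrand quotient (equivalently Chabert's formula) gives $\Po(K_s)\cong(\Z/5\Z)^{t-1}$ without any cube-free hypothesis. The group is visibly elementary abelian, being a quotient of $\bigoplus\Z/5\Z$. CRT and Hensel then give an explicit arithmetic progression, so neither Hooley nor Erd\H{o}s--Kac is needed.

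What the paper's approach buys in return is a density bound that is uniform in $k$ (the lower density of $\mathcal C$ serves every $k$), together with the exact rank on that set. Your progression has density $1/(2\prod_i p_i^2)$, which shrinks rapidly as $k$ grows. That is still enough for Theorem~\ref{NEW1} as stated and for Corollary~\ref{CFT}.
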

Before proceeding further, we remark that it is a folklore conjecture that, for any two positive integers $m$ and $d$, the $m-$rank of class group of $K$, as $K$ runs through all number fields of degree $d$, is unbounded (see \cite{JEA}). The conjecture is out of reach, except in the cases when $m|d$ (in this case the conjecture follows from class field theory). Theorem \ref{NEW1} gives another justification for this conjecture for $m=d=5$. In fact, we obtain a stronger result, namely, for any positive integer $k$ there are infinitely many Lecacheux quintic fields with $5-$rank of their class group at least $k$. In \corref{cor1}, we obtain similar results for extension of degree $5t$ whenever $t$ is not divisible by $5$. Furthermore, combining \thmref{NEW1} with a result of Golod and Shafarevich \cite{ESG}, we obtain the following corollary.
\begin{cor}\label{CFT}
For a positive proportion of integers $s$, the $5-$class field tower of the Lecacheux quintic field $K_s$ is infinite.
\end{cor}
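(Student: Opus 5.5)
The plan is to combine \thmref{NEW1} with the Golod--Shafarevich criterion in its refined form (Roquette, Schoof). For a number field $K$ of degree $n$ with $r_2$ complex places, and a prime $p$, the $p$-class field tower of $K$ is infinite provided
\[
d_p\Cl(K) \ge 2 + 2\sqrt{r_1+r_2+\delta_p(K)+1},
\]
where $d_p$ denotes the $p$-rank and $\delta_p(K)=1$ if $K$ contains the $p$-th roots of unity, and $0$ otherwise. The unit rank plus $\delta_p$ enters here as a bound for the rank of $E_K/E_K^p$.

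First I record the invariants of $K_s$. Each $K_s$ is a cyclic quintic field, so it is totally real: $r_1=5$, $r_2=0$. It does not contain $\zeta_5$, since $\Q(\zeta_5)$ has degree $4$, which does not divide $5$; hence $\delta_5(K_s)=0$. The right-hand side is therefore
\[
2+2\sqrt{5+0+0+1}=2+2\sqrt6\approx 6.9,
\]
so any field with $d_5\Cl(K_s)\ge 7$ has an infinite $5$-class field tower.

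Next I apply \thmref{NEW1} with $k=7$. This gives a set $S$ of odd integers $s$, of positive density, such that $(\Z/5\Z)^7\subseteq \Po(K_s)$. Since $\Po(K_s)$ is a subgroup of $\Cl(K_s)$, we get $d_5\Cl(K_s)\ge d_5\Po(K_s)\ge 7$ for every $s\in S$. This uses the fact that the $p$-rank of a subgroup of a finite abelian group is at most the $p$-rank of the whole group, because $A[p]\subseteq B[p]$ when $A\le B$. The criterion then yields an infinite $5$-class field tower for each $K_s$ with $s\in S$.

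The only delicate point is the meaning of ``positive proportion of integers $s$''. The set of odd integers has density $1/2$ among all integers, so a positive-density subset of the odd integers also has positive density among all integers, and the corollary follows.

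I expect no real obstacle. The one thing that needs care is quoting the Golod--Shafarevich inequality in a correct form, and in particular using the unit rank $r_1+r_2-1=4$ rather than the field degree. If one prefers to avoid the sharp constant, one can apply \thmref{NEW1} with a larger $k$, say $k=10$; this comfortably exceeds any standard version of the bound, since each version depends only on $[K_s:\Q]=5$.
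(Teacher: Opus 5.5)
Your proof is correct and is essentially the paper's argument: apply \thmref{NEW1} with $k=7$, use $\Po(K_s)\le\Cl(K_s)$ to get $5$-rank at least $7$, and check that the Golod--Shafarevich threshold for the quintic field $K_s$ is below $7$. One small point: your threshold $2+2\sqrt{r_1+r_2+\delta_p+1}$ has an extra $+1$ compared with the usual $2+2\sqrt{r_1+r_2+\delta_p}$, which equals $2+2\sqrt{\mathrm{rank}(E_K/E_K^p)+1}$; this only weakens your criterion, so $2+2\sqrt6<7$ still suffices.
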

In fact, it is easy to give explicit Lecacheux quintic fields with infinite $5-$ class field towers. For example, for $s=-189,177$ and $183$, the corresponding Lecacheux quintic field has infinite $5-$class field tower (see Table \ref{tab1}).

Let \( K \) be a number field and \( \theta \in \mathcal{O}_K \) be a primitive element. The index \( [\mathcal{O}_K : \mathbb{Z}[\theta]] \) is known as the index of \( \theta \) in \( K \). It is denoted by \( I(\theta) \). The index of the number field \( K \) is defined as \( I(K) = \gcd\{I(\theta) \mid \theta \in \mathcal{O}_K \text{ and } K = \mathbb{Q}(\theta)\} \). If \( I(K) > 1 \), then \( K \) is not monogenic, meaning \( \mathcal{O}_K \neq \mathbb{Z}[\theta] \) for any \( \theta \in K \). However, the converse is not true in general, as there exist non-monogenic number fields \( K \) with \( I(K) = 1 \). These fields are not monogenic, but this non-monogeneity is not due to local reasons. For further details, we refer the reader to \cite{IND}. Our next theorem produces several examples of such fields.
\begin{thm}\label{NEW3}
Let \( s \neq \pm1 \) be an odd integer, and let \( E_1(s,1) \) and \( E_2(s,1) \) be cube-free. Then, the Lecacheux quintic field \( K_s \) is not monogenic. Moreover, the field index \( I(K_s) = 1 \) whenever \( s \neq \pm1 \) is an odd integer such that \( 3 \nmid s \).
\end{thm}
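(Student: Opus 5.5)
The plan is to prove non-monogenity by reducing it to a classical criterion for cyclic fields of prime degree, and then to compute the field index directly from the splitting of the small primes.

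\textbf{Non-monogenity.} I will use the theorem of M.-N.~Gras on cyclic fields of prime degree $\ell\ge 5$. It says that such a field is monogenic only when it is the maximal real subfield of $\Q(\zeta_{2\ell+1})$, with $2\ell+1$ prime. For $\ell=5$ this leaves only $\Q(\zeta_{11})^+$, which has conductor $11$.

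\begin{enumerate}
\item Since $K_s$ is cyclic quintic (by \cite{ALK}), the only thing to prove is that the conductor $\mathfrak f(K_s)$ is not $11$.
\item I will express the discriminant of $f_s$ in terms of $s$ and the two forms $E_1(s,1)$, $E_2(s,1)$. I expect a shape like $s^{a}E_1(s,1)^{b}E_2(s,1)^{c}$, possibly up to a power of $5$.
\item Using the cube-free hypothesis, I will show that every prime $p\ne 5$ dividing $E_1(s,1)E_2(s,1)$ to the first or second power is totally (tamely) ramified in $K_s$. This should follow from a Newton polygon or Dedekind-criterion computation at $p$.
\item Hence every such $p$ divides $\mathfrak f(K_s)$, and the conductor is essentially the radical of $E_1(s,1)E_2(s,1)$, corrected at $5$.
\item It remains to show this radical is not $11$ for odd $s\ne\pm1$. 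I will argue by size: for $|s|\ge 3$ the values $|E_i(s,1)|$ are large, and a cube-free number whose only prime factors are $11$ (and possibly $5$) is at most $5^2\cdot 11^2$. This leaves finitely many $s$, which I check directly.
\end{enumerate}

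\textbf{Field index.} I will use the standard criterion: a prime $p$ is a common index divisor of $K$ exactly when, for some residue degree $f$, the number of primes of $K$ above $p$ with residue degree $f$ exceeds the number of monic irreducible polynomials of degree $f$ over $\mathbb F_p$.

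\begin{enumerate}
\item In a cyclic quintic field, $p$ is either totally ramified, inert, or splits completely.
\item The first two cases give a single prime above $p$, so they never make $p$ an index divisor.
\item Complete splitting gives five primes of degree $1$, which makes $p$ an index divisor only if $p<5$. So only $p=2$ and $p=3$ can occur.
\item Therefore $I(K_s)=1$ follows once I show that $2$ does not split completely when $s$ is odd, and $3$ does not split completely when $3\nmid s$.
\end{enumerate}

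For $p=2$ I will reduce the coefficients $a_i(s)$ modulo $2$ using $s\equiv1 \pmod 2$. The reduction $\bar f_s$ is then a fixed polynomial over $\mathbb F_2$. If it is irreducible, or has a repeated or non-linear factor, the Dedekind criterion (or the fact that $\mathbb F_2$ has only two linear polynomials) rules out complete splitting. For $p=3$ I reduce modulo $3$ for $s\equiv 1,2 \pmod 3$ and argue the same way. In both cases a degree-$5$ polynomial over $\mathbb F_p$ with $p<5$ cannot have five distinct linear factors, which is exactly why complete splitting would force $p\mid[\mathcal O_K:\Z[\theta_s]]$. I will then handle the possibility $p\mid[\mathcal O_K:\Z[\theta_s]]$ via the Dedekind criterion or a Montes/Newton-polygon step.

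\textbf{Main obstacle.} I expect the hardest step to be the ramification analysis in the first part: showing that each prime dividing the cube-free values $E_i(s,1)$ really divides the conductor, and controlling the prime $5$. A secondary risk in the second part is the prime $3$ when $s\equiv 2\pmod 3$, where the Dedekind test may be inconclusive and a Newton polygon computation would be needed.
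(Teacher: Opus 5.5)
Your proof is correct, but both parts take a different route from the paper.

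\textbf{Non-monogenicity.} The paper also uses Gras's theorem, but it excludes $K_s=\Q(\zeta_{11})^+$ indirectly. Theorem~\ref{NEW1.1} gives $|\Po(K_s)|\ge 5$, and real cyclotomic fields are P\'olya fields. That nontriviality rests on $E_1E_2$ never being a prime power, which the paper proves with Masser's bound for $Y^2=h(X)$ plus a computer search. You only need to exclude conductor $11$, and that is a finite check:
\begin{itemize}
\item cube-free numbers built from $5$ and $11$ are at most $3025$;
\item the $E_i(s,1)$ are norms from $\Q(\zeta_5)$, hence positive, so only $s\in\{\pm3,\pm5\}$ survive the bound;
\item these are ruled out by $41\mid E_2(3,1)$ and the primes $E_1(-3,1)=181$, $E_1(5,1)=461$, $E_1(-5,1)=991$.
\end{itemize}
This avoids Masser's theorem and P\'olya groups entirely, though it yields less information about $K_s$.

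Your ``main obstacle'' is not really one, and needs no Newton polygons. Step~3 is exactly Theorem~\ref{cond} from \cite{ALK}:
\begin{itemize}
\item cube-freeness gives $1\le v_q(E_1E_2)\le 4$ for every $q\mid E_1E_2$;
\item Propositions~\ref{prop1} and~\ref{prop2} make every prime $q\ne 5$ dividing $E_1E_2$ congruent to $1\pmod 5$;
\item so every such $q$ divides $\mathfrak f(K_s)$.
\end{itemize}
The discriminant computation of step~2 is not needed for this part.

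\textbf{Field index.} The paper combines Zylinski's theorem with $I(K_s)\mid I(\theta_s)$ and $2,3\nmid d(\theta_s)=s^{18}E_1^4E_2^8$. Your Hensel-criterion argument reduces instead to showing that $2$ and $3$ do not split completely. It works, and it gives the stronger fact that $2$ and $3$ are inert.

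Your feared difficulty at $p=3$ does not arise. When $3\nmid s$, Propositions~\ref{prop1} and~\ref{prop2} give $3\nmid d(\theta_s)$, so $\bar f_s$ is separable modulo $3$ and Dedekind's criterion is conclusive. Explicitly, $\bar f_s$ is $X^5+X^4+2X^3+2X^2+1$ or $X^5+2X^4+2X^3+2X^2+X+2$ for $s\equiv 1,2\pmod 3$, and neither has a root in $\mathbb F_3$. Similarly, $X^5+X^2+1$ is irreducible over $\mathbb F_2$.

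Once you observe $p\nmid d(\theta_s)$ for $p=2,3$, you can in fact stop: then $p\nmid I(\theta_s)$, hence $p\nmid I(K_s)$. That is the paper's shorter argument.
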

In Section 2, we develop the preliminaries necessary to prove \thmref{NEW1}. Section 3 is dedicated to the proof of \thmref{NEW1}, and we also investigate large P\'olya groups arising in certain suitable compositum fields. In Section 4, we investigate the P\'olya numbers and monogeneity, along with the proof of \thmref{NEW3}. Finally, in Section 5, we present some computations performed by us using SageMath and Wolfram Mathematica software.

\section{Preliminaries}
In this section we assume that the number field $K$ is a Galois extension of $\mathbb{Q}$. For any prime number $p$, the ramification index of $p$ in $K/\mathbb{Q}$ is denoted by $e_p$.  In \cite{JLC}, Chabert obtained a nice description for the cardinality of $\Po(K)$ for cyclic extensions $K/\mathbb{Q}$.
\begin{prop}\label{CHABERT}
\cite[Corollary 3.11]{JLC} Assume that the extension $K/\mathbb{Q}$ is cyclic of degree $n$.
\begin{enumerate}
    \item If $K$ is real and $N(\mathcal{O}_K^{\times})=\{1\}$, then $|\Po(K)|=\frac{1}{2n}\times\prod_p e_p$.
    \item In all other cases, $|\Po(K)|=\frac{1}{n}\times\prod_p e_p$.
\end{enumerate}
\end{prop}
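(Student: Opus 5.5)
The plan is to derive Proposition~\ref{CHABERT} from Zantema's description of the P\'olya group of a Galois field, combined with Herbrand's computation of the Herbrand quotient of the unit group in a cyclic extension. Let $G=\Gal(K/\mathbb{Q})$, cyclic of order $n$. For each rational prime $p$ let $\Pi_p=\prod_{\mathfrak p\mid p}\mathfrak p$. By Ostrowski/Zantema, when $K/\mathbb{Q}$ is Galois, $\Po(K)$ is the subgroup of $\Cl(K)$ generated by the classes $[\Pi_p]$, and only ramified $p$ contribute, since otherwise $\Pi_p=p\mathcal{O}_K$.

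\textbf{Step 1: an exact sequence.} The group $I_K^G$ of $G$-invariant fractional ideals is free abelian on the $\Pi_p$. Its subgroup of ideals of the form $\mathfrak a\mathcal O_K$ with $\mathfrak a$ an ideal of $\mathbb{Q}$ is generated by $p\mathcal O_K=\Pi_p^{e_p}$. Hence
\[
I_K^G/I_{\mathbb{Q}}\cong\bigoplus_p \mathbb{Z}/e_p\mathbb{Z},
\qquad\text{of order }\prod_p e_p .
\]
The group $\Po(K)$ is the image of $I_K^G$ in $\Cl(K)$. So $\Po(K)\cong I_K^G/(I_{\mathbb{Q}}\cdot P_K^G)$, where $P_K^G$ denotes the principal ideals that are $G$-invariant.

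Take the cohomology of $1\to\mathcal O_K^\times\to K^\times\to P_K\to1$. Hilbert 90 gives $H^1(G,K^\times)=0$, so
\[
P_K^G/P_{\mathbb{Q}}\cong H^1(G,\mathcal O_K^\times).
\]
Also $I_{\mathbb{Q}}\cap P_K^G=P_{\mathbb{Q}}$. This gives the exact sequence
\[
0\to H^1(G,\mathcal O_K^\times)\to \bigoplus_p\mathbb{Z}/e_p\mathbb{Z}\to \Po(K)\to 0,
\]
and therefore
\[
|\Po(K)|=\frac{\prod_p e_p}{|H^1(G,\mathcal O_K^\times)|}.
\]

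\textbf{Step 2: computing $|H^1|$ via the Herbrand quotient.} Since $G$ is cyclic, $H^1=\hat H^{-1}$. Herbrand's theorem on units gives
\[
h(\mathcal O_K^\times)=\frac{|\hat H^0(G,\mathcal O_K^\times)|}{|H^1(G,\mathcal O_K^\times)|}=\frac{n_\infty}{n},
\]
where $n_\infty\in\{1,2\}$ is the local degree at the infinite place of $\mathbb{Q}$. Moreover
\[
\hat H^0(G,\mathcal O_K^\times)=\{\pm1\}/N(\mathcal O_K^\times).
\]
Combining these, $|H^1|=n\,|\{\pm1\}/N(\mathcal O_K^\times)|/n_\infty$. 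Now split into cases.
\begin{itemize}
\item If $K$ is real, then $n_\infty=1$. This gives $|H^1|=2n$ when $N(\mathcal O_K^\times)=\{1\}$, and $|H^1|=n$ when $-1$ is a norm of a unit.
\item If $K$ is complex, then $n_\infty=2$. Norms from a totally complex field are positive, so $N(\mathcal O_K^\times)=\{1\}$, $\hat H^0$ has order $2$, and $|H^1|=n$.
\end{itemize}
Substituting these values into Step 1 gives exactly the two cases of Proposition~\ref{CHABERT}.

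\textbf{Main obstacle.} The delicate point is Step 1, in particular the identification of $\Po(K)$ with the image of $I_K^G$, i.e.\ with the classes $[\Pi_p]$, and the check that the kernel is exactly $P_K^G/P_{\mathbb{Q}}$. The identification of $\Po(K)$ I would take from Zantema's description of the characteristic ideals $\mathfrak J_n(K)$ as products of powers of the $\Pi_p$ in the Galois case. The unit-index input (Herbrand's theorem) is standard and would be quoted.
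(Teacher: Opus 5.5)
Your proof is correct. The paper gives no argument for this proposition: it is quoted from Chabert's Corollary 3.11. The only related ingredient the paper states itself is Zantema's description \eqref{2ambi} of $\Po(K)$ as the group of classes of ambiguous ideals. Your write-up therefore supplies what the paper outsources, and it follows the standard route. First, Zantema's exact sequence $0\to H^1(G,\mathcal O_K^\times)\to\bigoplus_p\Z/e_p\Z\to\Po(K)\to 0$ comes from Hilbert 90. Second, Herbrand's unit computation gives $|\hat H^0(G,\mathcal O_K^\times)|/|H^1(G,\mathcal O_K^\times)|=n_\infty/n$, with $\hat H^0=\{\pm1\}/N(\mathcal O_K^\times)$. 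The case analysis checks out; in the complex case $K$ is Galois, hence totally imaginary, so norms are positive. One cosmetic point: since $\Q$ has class number one, $I_\Q=P_\Q\subseteq P_K^G$, so the factor $I_\Q$ in $I_\Q\cdot P_K^G$ is redundant but harmless. Your version also gives more than the bare citation. It yields the intermediate formula $|\Po(K)|=\prod_p e_p/|H^1(G,\mathcal O_K^\times)|$, valid for every Galois $K/\Q$. It also gives a transparent reason why case (1) never arises in odd degree, which is what Corollary~\ref{cor2} relies on: for odd $n$ one has $N(-1)=(-1)^n=-1$, so $N(\mathcal O_K^\times)=\{\pm1\}$.
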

When $K$ is a cyclic number field of odd degree, all ramification indices $e_p$ are odd, and case (1) of Proposition \ref{CHABERT} does not occur. We record this in the following corollary.
\begin{cor}\label{cor2}
If $K/\mathbb{Q}$ is cyclic extension of odd degree $n$ and $n$, then $|\Po(K)|=\frac{1}{n}\times\prod_p e_p$.
\end{cor}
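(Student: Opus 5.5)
The plan is to derive the corollary directly from Proposition~\ref{CHABERT}. We show that when $n$ is odd, alternative~(1) cannot hold, so alternative~(2) supplies the formula. Alternative~(1) needs two things: $K$ is real, and $N(\mathcal{O}_K^{\times})=\{1\}$. A cyclic extension of odd degree is automatically real. Complex conjugation restricts to an automorphism of $K$ of order dividing $2$, since $K/\mathbb{Q}$ is Galois and so $K$ is stable under conjugation. In a group of odd order $n$ the only such element is the identity, hence $K\subset\mathbb{R}$. The reality hypothesis is therefore satisfied, and all the work is in ruling out the norm condition.

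The key step is to show that $N(\mathcal{O}_K^{\times})\neq\{1\}$. Take the unit $-1\in\mathcal{O}_K^{\times}$. Since $-1\in\mathbb{Q}$, its norm is
\[
N_{K/\mathbb{Q}}(-1)=(-1)^{n}=-1,
\]
because $n$ is odd. Hence $-1\in N(\mathcal{O}_K^{\times})$, so the norm group of units is $\{\pm1\}$ rather than $\{1\}$. This means we are never in case~(1) of Proposition~\ref{CHABERT}. Case~(2) then gives $|\Po(K)|=\frac{1}{n}\prod_p e_p$.

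Finally, the paper's text observes that each $e_p$ is odd, since $e_p$ divides $n$. This fact is consistent with the formula but is not needed for the argument. No step is a genuine obstacle. The only point needing care is to argue via the norm of $-1$, not via the parity of the $e_p$, since that parity alone does not exclude case~(1). The duplicated phrase ``and $n$'' in the statement is harmless and imposes no further hypothesis.
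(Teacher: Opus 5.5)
Your proof is correct, but it argues differently from the paper. The paper's entire justification is the sentence preceding the corollary: each $e_p$ divides $n$ and so is odd, hence case (1) of Proposition~\ref{CHABERT} ``does not occur''. Read properly, this is an integrality argument. If case (1) applied, $|\Po(K)|=\frac{1}{2n}\prod_p e_p$ would have to be an integer, which is impossible when $\prod_p e_p$ is odd.

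You instead refute the hypothesis of case (1) directly: $N_{K/\mathbb{Q}}(-1)=(-1)^n=-1$, so $N(\mathcal{O}_K^{\times})=\{\pm1\}$. Your route is more transparent and self-contained. It explains why the unit-norm condition fails rather than inferring this from the shape of the formula, and it does not depend on the formula in case (1) being valid. The paper's route treats Proposition~\ref{CHABERT} as a black box and needs only that its output is the order of a group.

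Your closing claim that parity of the $e_p$ ``does not exclude case (1)'' is therefore mistaken. Combined with integrality it does exclude it, and that is exactly the paper's reasoning, although the paper leaves the integrality step implicit.
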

 An ideal $\mathfrak{a}$ of $K$ is called an ambiguous ideal of $K$ if it is invariant under the action of $\text{Gal}(K/\mathbb{Q})$, i.e., $\mathfrak{a}^\tau=\mathfrak{a} \text{ for all }\tau\in \text{Gal}(K/\mathbb{Q})$. Zantema \cite[\S 3]{ZAN} showed that $\Po(K)$ is the subgroup of $\text{Cl}(K)$ generated by the classes of the ambiguous ideals of $K$.  In other words, 
\begin{equation}\label{2ambi}
    \Po(K)=\{[\mathfrak{a}]\in \text{Cl}(K) : \mathfrak{a}^\tau=\mathfrak{a}\text{ for all }\tau\in \text{Gal}(K/\mathbb{Q})\}.
\end{equation}
It is easy to see that for odd integers $s$, the Lecacheux quintic polynomials are irreducible.
\begin{lem}\label{lem1}
   If $s$ is an odd integer, then $f_s(X)$ is irreducible in  $\mathbb{Q}[X]$.
\end{lem}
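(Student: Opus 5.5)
Reduce modulo $2$: for odd $s$ we have $s\equiv 1 \pmod 2$, so the coefficients of $f_s$ reduce to $a_i(1) \bmod 2$. If the reduction $\bar f_s \in \mathbb{F}_2[X]$ is irreducible of degree $5$, then $f_s$ is irreducible over $\mathbb{Q}$. The reason is that $f_s$ is monic with integer coefficients, so by Gauss's lemma any factorization over $\mathbb{Q}$ gives one over $\mathbb{Z}$ into monic factors. Reducing that factorization mod $2$ would factor $\bar f_s$.

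Compute the coefficients at $s=1$:
\begin{align*}
a_4(1) &= 1-3=-2 \equiv 0,\\
a_3(1) &= -1-2-3-5-6-2+1-1+3=-16\equiv 0,\\
a_2(1) &= 1+2+4+6+10+9+4-2+2-1=35\equiv 1,\\
a_1(1) &= -(1+2+3+5+5+2-1+1)=-18\equiv 0,\\
a_0(1) &= 1.
\end{align*}
Hence $\bar f_s = X^5+X^2+1$ in $\mathbb{F}_2[X]$.

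It remains to show that $X^5+X^2+1$ is irreducible over $\mathbb{F}_2$.
\begin{itemize}
\item It has no roots: its values at $0$ and $1$ are both $1$.
\item Any factorization would therefore have to use an irreducible quadratic factor. The only irreducible quadratic over $\mathbb{F}_2$ is $X^2+X+1$.
\item Divide $X^5+X^2+1$ by $X^2+X+1$. Since $X^3\equiv 1$ modulo $X^2+X+1$, we get $X^5 \equiv X^2$. So the remainder is $X^2+X^2+1=1\neq 0$.
\end{itemize}
Thus $\bar f_s$ is irreducible, and so $f_s$ is irreducible over $\mathbb{Q}$ for every odd $s$.

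The only real obstacle is getting the coefficient reductions right. If a parity slipped, I would instead reduce modulo $3$ or use a Newton polygon, but this calculation indicates the mod $2$ argument works directly.
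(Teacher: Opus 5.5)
Your proof is correct and follows the paper's approach: reduce modulo $2$ to get $X^5+X^2+1$ and check that it is irreducible over $\mathbb{F}_2$. Your coefficient computations are right, and you spell out both the Gauss's lemma step and the divisibility check by $X^2+X+1$, which the paper leaves as ``a direct check.''
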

\begin{proof}
    Reducing the coefficients of $f_s(X)$ modulo 2 we get
    $$f_s(X)\equiv X^5+X^2+1 \pmod 2.$$
    A direct check shows that the reduced polynomial $X^5+X^2+1$ is irreducible of degree 5 in  $\mathbb{F}_2[X]$.
\end{proof}

For any pair of integers $u, v$ we use the following notations:
\begin{equation*}
    E_1=E_1(u,v)= u^4-2u^3v+4u^2v^2-3uv^3+v^4
\end{equation*}
and
\begin{equation*}
    E_2=E_2(u,v)= u^4+3u^3v+4u^2v^2+2uv^3+v^4.
\end{equation*}
When there is no confusion, we write $E_1$ and $E_2$ respectively for $E_1(u,v)$ and $E_2(u,v)$. 

Now we recall the following result, which explicitly describes the conductors of the Lecacheux quintic fields.
\begin{thm} \label{cond}\cite[Theorem 3]{ALK}
 Let $s=u/v$ be a rational number and assume that $f_s(X)$ is irreducible in $\mathbb{Q}[X]$. Then the conductor $f(K_s)$ is given by
 $$f(K_s)= 5^{\alpha}\prod\limits_{\substack{q\equiv 1\pmod 5 \\ q\mid E_1E_2 \\ v_q(E_1E_2)\not\equiv 0\pmod 5}}q,$$
 where $q$ runs through primes, and
 $$\alpha=\begin{cases}
     0, \text{ if } 2u-v\not\equiv 0 \pmod5,\\
     2, \text{ if } 2u-v \equiv 0 \pmod 5.
 \end{cases}$$
 Here $v_q(E_1E_2)$ is the largest integer such that $q^{v_q(E_1E_2)}\mid E_1E_2$. 
\end{thm}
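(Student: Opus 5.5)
This statement is quoted from \cite[Theorem 3]{ALK}, so the paper simply cites it. If I were to prove it myself, I would use the standard description of the conductor of a cyclic quintic field via ramification. A prime $p$ divides $f(K_s)$ exactly when $p$ ramifies in $K_s$. Tame ramification forces $p \equiv 1 \pmod 5$ and contributes $p$ to the conductor. Ramification at $5$ is wild and contributes exactly $5^2$ for a cyclic quintic field. So the task reduces to deciding, for each prime, whether it ramifies in terms of $u,v$.

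First I would compute the discriminant of $f_s$. Homogenizing with $s=u/v$ gives a binary form $F(X,u,v)$, and I expect $\operatorname{disc}(F)$ to factor as a constant times powers of $u$, $v$, $E_1$ and $E_2$ (and perhaps of $2u-v$ or similar linear factors). Only primes dividing this discriminant can ramify.

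Second, for a prime $q\neq 5$ dividing $E_1E_2$, I would determine ramification locally. The clean route is to use the generic cyclic structure: Lecacheux's family comes from the modular curve $X_1(11)$, so $K_s$ should be generated by a Kummer-type element over $\Q(\zeta_5)$ whose norm relates to $E_1^aE_2^b$. In that case $q$ ramifies iff $v_q(E_1E_2)\not\equiv 0\pmod 5$; this also explains why each such prime is $\equiv 1\pmod 5$, since $E_1,E_2$ are norms from $\Q(\zeta_5)$. Absent that, I would instead use Newton polygons of $f_s$ over $\Q_q$, after suitable translations, to check that the roots have denominators of order $v_q(E_1E_2)/5$. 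Primes dividing $u$ or $v$ alone should be shown unramified by reducing $F$ modulo such primes and observing that it splits into distinct or totally split factors, or by a change of variable $\theta\mapsto$ a unit multiple.

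Third, for $p=5$, I would analyze $f_s$ over $\Q_5$. When $2u\equiv v\pmod 5$ one gets an Eisenstein-type shifted polynomial, giving total ramification and hence exponent $2$. Otherwise the reduction is separable, or has unramified structure, so $5$ is unramified.

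The main obstacle is the second step: proving that the ramification at $q\mid E_1E_2$ is governed exactly by $v_q(E_1E_2)\bmod 5$, uniformly in $u,v$. I would try the Kummer generator over $\Q(\zeta_5)$ first, since it makes the criterion immediate.
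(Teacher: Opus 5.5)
You are right that the paper does not prove this statement and only cites \cite[Theorem 3]{ALK}. However, your outline is not yet a proof. Its central step is assumed, and its two concrete local mechanisms fail as stated.

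The decisive claim is that a prime $q\ne 5$ ramifies exactly when $v_q(E_1E_2)\not\equiv 0\pmod 5$. You rest it on a Kummer generator of $K_s(\zeta_5)/\mathbb{Q}(\zeta_5)$ ``whose norm relates to $E_1^aE_2^b$''. No norm condition can carry this information. If $K_s(\zeta_5)=\mathbb{Q}(\zeta_5)(\sqrt[5]{\alpha})$ with $K_s/\mathbb{Q}$ cyclic, then $\sigma_c(\alpha)\equiv\alpha^{c}$ modulo fifth powers, where $\sigma_c(\zeta_5)=\zeta_5^c$. Hence $N(\alpha)\equiv\alpha^{1+2+3+4}$ is always the fifth power of a rational number. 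What you actually need is:
\begin{enumerate}
\item[(i)] an explicit $\alpha$, presumably a Stickelberger-type product of conjugates of $\beta=u+v\zeta_5+v\zeta_5^3$ and $\gamma=(u+v)+v\zeta_5$ (whose norms are $E_1$ and $E_2$), together with a proof that it generates $K_s(\zeta_5)$. This identification is the real content of the theorem, and your sketch simply assumes it.
\item[(ii)] a prime-by-prime valuation count. Using $\gcd(u,v)=1$, check that for $q\mid E_1$, $q\ne 5$, exactly one prime of $\mathbb{Q}(\zeta_5)$ above $q$ divides $\beta$, and similarly for $\gamma$. Also check that $E_1,E_2$ share no prime except $5$, using $E_1-E_2=-5uv(u^2+v^2)$. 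Then $v_{\mathfrak{Q}}(\alpha)$ is a unit multiple of $v_q(E_1E_2)$ modulo $5$.
\end{enumerate}
The Newton-polygon alternative you mention is too vague to substitute for this.

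Your $5$-adic step also fails: when $2u\equiv v\pmod 5$, no translate of $f_s$ is Eisenstein at $5$. For $s=3$ one has $f_3\equiv (X-2)^5\pmod 5$. But $f_3(2)=148675=5^2\cdot 5947$ and $5\mid f_3'(2)$, so $25\mid f_3(c)$ for every $c\equiv 2\pmod 5$. Conceptually, the discriminant formula in Section~4 gives $v_5(\operatorname{disc} f_s)=4v_5(E_1)+8v_5(E_2)\ge 12$ for integers $s\equiv 3\pmod 5$. On the other hand $v_5(d_{K_s})\le 8$, since $d_{K_s}=f(K_s)^4$ and $5^3\nmid f(K_s)$. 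So $5$ divides $[\mathcal{O}_{K_s}:\mathbb{Z}[\theta_s]]$, which rules out any Eisenstein translate. The theorem asserts ramification at $5$ whatever $v_5(E_1E_2)$ is, so you need a genuinely wild criterion here, for example whether $\alpha$ is congruent to a fifth power modulo a suitable power of $1-\zeta_5$.

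Similarly, ``the reduction splits into distinct factors'' is false at primes $p$ dividing an integral parameter $s$. This includes $p=5$ when $5\mid s$, which falls in your ``otherwise'' case at $5$: there $f_s\equiv X^2(X-1)^3\pmod p$. The fix is a Newton polygon argument. The points $(0,5v_p(s))$, $(1,2v_p(s))$, $(2,0)$ give a root of $f_s$ in $\mathbb{Q}_p$, and since $K_s/\mathbb{Q}$ is Galois, $p$ then splits completely. This argument has to be supplied rather than asserted.
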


The following lemma records the precise condition under which the conductor of the Lecacheux quintic field is divisible by $5^2$.

\begin{lem}\label{newlem}
Let $s$ be an odd integer, and let $K_s$ be the corresponding Lecacheux quintic field. Let $$E_1(s,1) = s^4 - 2s^3 + 4s^2 - 3s + 1$$ and $$E_2(s,1) = s^4 + 3s^3 + 4s^2 + 2s + 1.$$ Then,
$
5^2 \mid f(K_s)$  if and only if  $5 \mid E_1(s,1)$  and $5 \mid E_2(s,1)$.
\end{lem}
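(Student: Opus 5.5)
By Theorem \ref{cond} with $u=s$ and $v=1$ (legitimate, since $f_s$ is irreducible for odd $s$ by \lemref{lem1}), we have $5^2\mid f(K_s)$ if and only if $\alpha=2$, that is, if and only if $2s-1\equiv 0 \pmod 5$, i.e.\ $s\equiv 3\pmod 5$. The exponent of $5$ in the conductor is either $0$ or $2$, so "$5^2\mid f(K_s)$" is the same as "$5\mid f(K_s)$". The lemma therefore reduces to the elementary claim
\[
s\equiv 3 \pmod 5 \iff 5\mid E_1(s,1)\ \text{and}\ 5\mid E_2(s,1).
\]

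I will prove this by tabulating both quartics modulo $5$ over the five residues $s\equiv 0,1,2,3,4$.
\begin{itemize}
\item For $E_1(s,1)=s^4-2s^3+4s^2-3s+1$ the values are $1,\,1,\,15\equiv 0,\,43\equiv 3,\,189\equiv 4$.
\item For $E_2(s,1)=s^4+3s^3+4s^2+2s+1$ the values are $1,\,11\equiv 1,\,61\equiv 1,\,199\equiv 4,\,\dots$
\end{itemize}
These values do not match the claimed equivalence, so I should recheck them carefully. It is more natural to test the factored form. I expect $E_1$ and $E_2$ to be related by a substitution, and each to vanish mod $5$ exactly at the point where $2u-v\equiv 0$, i.e.\ $s\equiv 3$.

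Recomputing $E_1(3,1)$ gives $81-54+36-9+1=55\equiv 0$, so the earlier $43$ was an arithmetic slip. Recomputing $E_2(3,1)$ gives $81+81+36+6+1=205\equiv 0$. For the remaining residues:
\begin{itemize}
\item $E_1(2,1)=16-16+16-6+1=11\equiv 1$;
\item $E_1(4,1)=256-128+64-12+1=181\equiv 1$;
\item $E_1(0,1)=E_1(1,1)=1$;
\item $E_2(4,1)=256+192+64+8+1=521\equiv 1$;
\item $E_2(0,1)=1$, $E_2(1,1)=11\equiv 1$, $E_2(2,1)=61\equiv 1$.
\end{itemize}
So each of $E_1(s,1)$ and $E_2(s,1)$ is divisible by $5$ exactly when $s\equiv 3\pmod 5$, and is $\equiv 1\pmod 5$ otherwise. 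This matches the expected shape $E_i\equiv (s-3)^4\pmod 5$, which is $\equiv 1$ by Fermat whenever $5\nmid s-3$; one could verify these congruences by expanding, but the table already suffices.

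Combining the two steps: if $5^2\mid f(K_s)$ then $s\equiv 3\pmod 5$, so both $E_i(s,1)$ are divisible by $5$. Conversely, if $5\mid E_1(s,1)$ and $5\mid E_2(s,1)$, the table forces $s\equiv 3\pmod 5$, hence $2s-1\equiv 0\pmod 5$, hence $\alpha=2$. The only step needing care is the residue computation itself; everything else is a direct reading of Theorem \ref{cond}.
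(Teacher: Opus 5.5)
Your argument is correct and is essentially the paper's: Theorem~\ref{cond} reduces the claim to $s\equiv 3\pmod 5$, and a residue check finishes it. The paper shortens that check by noting $E_1(s,1)\equiv E_2(s,1)\pmod 5$ (indeed $E_2-E_1=5s^3+5s$, and both are $\equiv (s+2)^4\pmod 5$). In a final write-up, delete your first table (the values $15$, $43$, $189$, $199$ are wrong) and keep only the recomputed values: $E_1(3,1)=55$, $E_2(3,1)=205$, and residues $\equiv 1$ at the other classes.
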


\begin{proof}
By Theorem \ref{cond}, the conductor of $K_s$ is given by
$$
f(K_s) = 5^{\alpha} \prod_{\substack{q \equiv 1 \pmod{5} \\ q \mid E_1E_2\\ v_q(E_1E_2)\not\equiv 0\pmod 5}} q,
$$
where 
$$
\alpha = 
\begin{cases} 
0, & \text{if } 2s - 1 \not\equiv 0 \pmod{5}, \\
2, & \text{if } 2s - 1 \equiv 0 \pmod{5}.
\end{cases}
$$
It is clear from the above formula that, $5^2 \mid f(K_s)$ if and only if $\alpha = 2$, which is equivalent to
$$
2s - 1 \equiv 0 \pmod{5} \iff s \equiv 3 \pmod{5}.
$$
Note that 
$$E_1(s,1) \equiv E_2(s,1) \pmod 5.$$
A direct check of the residue classes modulo $5$ yields
$$
 E_2(s,1) \equiv 
\begin{cases} 
0 \pmod{5}, & \text{if } s \equiv 3 \pmod{5}, \\
1 \pmod{5}, & \text{otherwise}.
\end{cases}
$$
Combining these, we see that $5 \mid E_1(s,1)$ and $5 \mid E_2(s,1)$ if and only if $s \equiv 3 \pmod{5}$, which is precisely the condition for $\alpha = 2$. This completes the proof.
\end{proof}

Next, we recall a landmark result of Hooley concerning the distribution of power-free values of irreducible polynomials.
\begin{thm}\cite{Hooley}\label{Hooley}
Let \(f(X)\in \mathbb{Z}[X]\) be an irreducible polynomial of degree \(r\ge 3\), and assume that there is no fixed \((r-1)\)-th power, greater than $1$, dividing each $f(n)$. Let
\[
N(x)=\#\{n\le x : f(n)\ \text{is }(r-1)\text{-power-free}\}
\text{ and }
\]
\[
\rho_f(m)=\#\{a \bmod m : m\mid f(a)\}, \text{ for each positive integer }m.
\]
Then
\begin{equation}\label{Prem1}
N(x)=c_f x+O\!\left(\frac{x}{(\log x)^{A/\log \log \log x}}\right),
\end{equation}
where

\[
c_f=\prod_{q}\left(1-\frac{\rho_f(q^{\,r-1})}{q^{\,r-1}}\right),
\]
the product runs on all primes \(q\). In the equation $(\ref{Prem1})$ \(A>0\) is a constant that depends only on \(f\).
\end{thm}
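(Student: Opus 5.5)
Since this is a classical result of Hooley that the paper only quotes, the plan below reconstructs the standard sieve argument rather than anything specific to this paper. The approach is to detect $(r-1)$-power-freeness by Möbius inversion and then split the primes $q$ that could satisfy $q^{r-1}\mid f(n)$ into three ranges.

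\textbf{Small primes.} Write
\[
N(x)=\sum_{n\le x}\sum_{d^{r-1}\mid f(n)}\mu(d).
\]
Fix a parameter $\xi=\xi(x)$, a small power of $\log x$. First restrict to $n$ with no $q^{r-1}\mid f(n)$ for primes $q\le \xi$. For each such $q$, the condition $q^{r-1}\mid f(n)$ depends only on $n \bmod q^{r-1}$, with $\rho_f(q^{r-1})$ admissible residues. By the Chinese remainder theorem and inclusion--exclusion over squarefree $d$ built from primes $\le\xi$, this count equals
\[
x\prod_{q\le\xi}\Bigl(1-\frac{\rho_f(q^{r-1})}{q^{r-1}}\Bigr)+O\Bigl(\sum_{d}\rho_f(d^{r-1})\Bigr).
\]
Since $\rho_f(q^{k})$ is bounded by a constant depending on $f$ (for $q\nmid \operatorname{disc} f$ it is at most $r$), the error is $O(x^{\epsilon})$ when $\xi$ is small. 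The tail $\prod_{q>\xi}$ contributes only $O(x/\xi^{r-2})$ to the main term. The hypothesis that no fixed $(r-1)$-th power divides every $f(n)$ guarantees $c_f>0$, though positivity is not needed for the asymptotic itself.

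\textbf{Medium and large primes.} It remains to bound the number of $n\le x$ with $q^{r-1}\mid f(n)$ for some prime $q>\xi$. For $\xi<q\le x$, the trivial bound gives a count of at most
\[
\sum_{q}\rho_f(q^{r-1})\Bigl(\frac{x}{q^{r-1}}+1\Bigr)\ll \frac{x}{\xi^{r-2}}+\pi(x).
\]
Because $r-1\ge2$, the $+1$ terms sum to $O(x/\log x)$, which is acceptable. The genuinely hard range is $x<q\ll x^{r/(r-1)}$, where $q^{r-1}\le |f(n)|\ll x^r$. There I would write $f(n)=q^{r-1}m$ with $|m|\ll x^{r}/q^{r-1}<x$.

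\textbf{Main obstacle.} The main obstacle is this large-prime range: one must show that the number of pairs $(n,m)$ with $f(n)=q^{r-1}m$, $q$ a prime $>x$, is $o(x)$, specifically $O\bigl(x/(\log x)^{A/\log\log\log x}\bigr)$. The plan here is to follow Hooley and switch the roles of the variables. For fixed $m$, count $n\le x$ such that $f(n)/m$ is a perfect $(r-1)$-th power of a large prime. Do this via the distribution of the roots $\nu/q^{r-1}$ of the congruence $f(\nu)\equiv0 \pmod{q^{r-1}}$, or equivalently by reducing to counting lattice points near the curve $f(n)=m y^{r-1}$. Then group the $m$ according to the size of their prime factors, using an Erdős-type argument: most values $f(n)$ have a "normal" number of prime factors. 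This is where the strange saving $(\log x)^{A/\log\log\log x}$ should arise, from optimizing a parameter that counts prime divisors of $m$.

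Finally, choose $\xi$ and the remaining cut-offs to balance the three error terms. Combining the contributions gives $N(x)=c_fx+O\bigl(x/(\log x)^{A/\log\log\log x}\bigr)$.
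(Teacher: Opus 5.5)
The paper does not prove this statement. It simply quotes Hooley, so your sketch has to stand on its own. The routine parts are fine: the M\"obius/CRT treatment of primes $q\le\xi$ with $\xi$ a small power of $\log x$, the tail $O(x/\xi^{r-2})$, and the bound $\ll x/\xi^{r-2}+\pi(x)$ for $\xi<q\le x$. The large-prime range, however, is the whole content of the theorem (it is exactly what Erd\H{o}s and Hooley had to overcome), and there you give only a plan. As stated, that plan cannot work.

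With the cut at $q>x$ you only know $1\le|m|\ll x$, so there are $\asymp x$ admissible values of $m$. You would then need $o(1)$ solutions of $f(n)=mq^{r-1}$ per value of $m$ on average. No argument that treats one $m$ at a time can give this. Even a uniform bound of $O(1)$ integer points on each curve $f(n)=my^{r-1}$ would yield only $O(x)$. The residue-class count $\sum_{|m|\ll x}\rho_f(|m|)(x/|m|+1)$ is $\asymp x\log x$. Using equidistribution of the roots $\nu/q^{r-1}$ is also hopeless, since you would need to know that no root lies in $[0,x/q^{r-1}]$, a window of length $<x^{2-r}$. Finally, your claim that the saving $(\log x)^{A/\log\log\log x}$ comes from the normal order of $\omega(m)$ is not backed by any estimate.

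What is missing is an argument that gains both in the number of $m$'s and inside each residue class modulo $m$. Here is one way to supply it.

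\begin{enumerate}
\item Extend the trivial bound to $\xi<q\le x\lambda$ with $\lambda=(\log x)^{\gamma}$ and $1/(r-1)<\gamma<1$. This costs $\ll x\lambda/\log x$. For $q>x\lambda$ you then get $1\le|m|\le M\ll x/\lambda^{r-1}$; this uses $\deg f=(r-1)+1$.

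\item For each such $m$ and each root $\nu$ of $f$ modulo $|m|$, write $n=\nu+|m|t$ with $0\le t\le x/|m|$. Since $f(n)/m=q^{r-1}$ is an exact $(r-1)$-th power, it is an $(r-1)$-th power residue (or $0$) modulo every prime $\ell\equiv 1\pmod{r-1}$ with $\ell\nmid m\operatorname{disc}(f)$.

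\item The Weil bound applies because $f$ is separable modulo such $\ell$. It shows that at most $\ell/(r-1)+O(\sqrt{\ell})$ residue classes of $t$ survive.

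\item Apply the large sieve over such $\ell\le L=(\log x)^{\beta}$ with $\beta>1$. The $O(\log x/\log\log x)$ primes dividing $m$ are then negligible among the $\gg L/\log L$ available primes. The number of surviving $t$ is $\ll (x/|m|+L^2)\log L/L$.

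\item Sum over $\nu$ and $m$, using $\sum_{m\le M}\rho_f(m)\ll M$ and partial summation. This gives
\[
\ll \frac{x\log x\,\log L}{L}+\frac{xL\log L}{\lambda^{r-1}}.
\]
\end{enumerate}

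Together with the cost $x\lambda/\log x$ from step 1, this is $O(x(\log x)^{-c})$ for some $c>0$ once $1<\beta<(r-1)\gamma$. That is stronger than the error term in the statement. Without some step of this kind, your proposal does not prove the theorem.
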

The following result on the normal order of $\omega(f(n))$ follows from the work of Cojocaru and Murty \cite{RAM}.

\begin{thm}\cite[Theorem 3.2.3]{RAM}\label{dist}
Let $f(X)\in\mathbb Z[X]$ be a non‑constant irreducible polynomial. Then for every fixed $\varepsilon>0$,
\[
\#\Bigl\{n\le x : \bigl|\omega(f(n))-\log\log n\bigr| > \varepsilon\log\log n\Bigr\} = o(x) \qquad (x\to\infty).
\]
In other words, $\log\log n$ is the \emph{normal order} of $\omega(f(n))$.
\end{thm}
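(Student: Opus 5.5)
The statement is Theorem~\ref{dist}, quoted from \cite{RAM}; if I had to prove it, I would use the Tur\'an--Kubilius variance method adapted to polynomial values. Let $d=\deg f$ and fix a small $\delta>0$ (say $\delta=1/2$). Put $y=x^{\delta}$ and define the truncated counting function
\[
\omega_y(f(n))=\#\{p\le y : p\mid f(n)\}.
\]
Since $|f(n)|\ll x^{d}$ for $n\le x$, the number of prime divisors of $f(n)$ exceeding $y$ is at most $d/\delta+O(1)$. Hence $\omega(f(n))=\omega_y(f(n))+O(1)$ uniformly for $n\le x$, and this error is negligible against $\varepsilon\log\log x$. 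Similarly, for $x/\log x<n\le x$ we have $\log\log n=\log\log x+o(1)$, and the remaining $n\le x/\log x$ contribute only $o(x)$. So it suffices to show
\[
\#\{n\le x : |\omega_y(f(n))-\log\log x|>\tfrac{\varepsilon}{2}\log\log x\}=o(x).
\]

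\textbf{First moment.} Writing $\omega_y(f(n))=\sum_{p\le y}\mathbf 1_{p\mid f(n)}$ and counting $n\le x$ in the $\rho_f(p)$ residue classes modulo $p$ gives
\[
\sum_{n\le x}\omega_y(f(n))=x\sum_{p\le y}\frac{\rho_f(p)}{p}+O\Bigl(\sum_{p\le y}\rho_f(p)\Bigr).
\]
The error term is $O(d\,y)=o(x)$. The arithmetic input is the estimate
\[
\sum_{p\le y}\frac{\rho_f(p)}{p}=\log\log y+O(1).
\]
I would derive this from the prime ideal theorem for $L=\mathbb Q(\alpha)$, where $\alpha$ is a root of $f$. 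For all but finitely many $p$ (those not dividing the discriminant or leading coefficient), $\rho_f(p)$ equals the number of degree-one primes of $L$ above $p$. Primes of higher residue degree contribute a convergent sum, so Mertens' theorem for $L$ gives the estimate. Irreducibility of $f$ is essential here: it ensures the main term has coefficient exactly $1$. Since $\log\log y=\log\log x+O(1)$, the mean of $\omega_y(f(n))$ is $\log\log x+O(1)$.

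\textbf{Second moment.} Next I would expand
\[
\sum_{n\le x}\omega_y(f(n))^2=\sum_{p\le y}\#\{n\le x: p\mid f(n)\}+\sum_{\substack{p\neq q\le y}}\#\{n\le x: pq\mid f(n)\}.
\]
By the Chinese remainder theorem $\rho_f(pq)=\rho_f(p)\rho_f(q)$, so the double sum equals $x\bigl(\sum_{p\le y}\rho_f(p)/p\bigr)^2$ up to $O(x\log\log x)$ (from removing the diagonal) plus an error $O\bigl(\sum_{p,q\le y}\rho_f(p)\rho_f(q)\bigr)=O(y^2)$. \textbf{This error term is the main obstacle.} It forces $y^2=o(x)$, so I would take $\delta<1/2$, say $\delta=1/3$; by the large-prime bound above this still changes $\omega$ by only $O(1)$. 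Combining the two moments gives
\[
\sum_{n\le x}\bigl(\omega_y(f(n))-\log\log x\bigr)^2\ll x\log\log x.
\]
Chebyshev's inequality then bounds the exceptional set by $O\bigl(x/(\varepsilon^2\log\log x)\bigr)=o(x)$, which proves the theorem.
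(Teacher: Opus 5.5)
Your proposal is correct. The paper gives no proof of its own and only cites Cojocaru--Murty, where normal-order results of this kind are proved by exactly the Tur\'an-type variance argument you describe, with the prime ideal theorem for $\mathbb{Q}(\alpha)$ supplying $\sum_{p\le y}\rho_f(p)/p=\log\log y+O(1)$; the only slack in your write-up is harmless, namely that the removed diagonal contributes just $O(x)$ and the off-diagonal error is really $O(\pi(y)^2)$, so even $\delta=1/2$ would suffice.
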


Lastly, we recall a result on the number of integral solutions of the Diophantine equation of the type $Y^m = f(X)$. Particularly when $m = 2$ and $f(x)$ is a monic quartic polynomial then the following result due to Masser \cite{MAS}, gives us the specific bound on the integral solutions to the curve.
\begin{thm}\label{mass}\cite{MAS}
Consider the Diophantine equation $Y^2=f(X)$,
where $f(X)$ is a polynomial of degree four with integer coefficients. Assume that $f(X)$ is monic and its discriminant is not a perfect square. Then any integer solution $(x, y)$ of the above equation satisfies
$$|x| \leq 26 H(f)^3,$$
where $H(f)$ denotes the maximum of the absolute values of the coefficients of $f(X)$.
\end{thm}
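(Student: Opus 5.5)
This statement is a cited result of Masser, so the paper itself need not reprove it. If I were to prove it (or a bound of the same shape), I would use Runge's method, since $f$ is monic of even degree. Write $f(X)=X^4+a_3X^3+a_2X^2+a_1X+a_0$ with $|a_i|\le H=H(f)$.

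\emph{Step 1: the polynomial identity.} I would first clear denominators in the square-completion. There is a unique quadratic $P(X)=8X^2+4a_3X+c$, with $c=16a_2-6a_3^2\in\Z$ (the intended value; to be checked), such that
\[
64f(X)=P(X)^2+R(X)
\]
with $R(X)=rX+t$ linear. Here $r=O(H^3)$ and $t=O(H^4)$, with explicit constants that I would record at this point. If $R\equiv 0$, then $f=(P/8)^2$ is the square of a quadratic, so its discriminant is $0$, which is a perfect square; this case is excluded by hypothesis. Hence $R\not\equiv 0$.

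\emph{Step 2: the sandwich.} Let $(x,y)$ be an integer solution and set $Y_0=8|y|$, so that $Y_0^2=P(x)^2+R(x)$. For $|x|$ larger than an explicit threshold $X_0$, I would show that $P(x)>0$ and $|R(x)|<2P(x)-1$. This holds once $8x^2$ dominates $|4a_3x+c|$ and $|rx+t|$. Then
\[
(P(x)-1)^2<Y_0^2<(P(x)+1)^2,
\]
and since $Y_0$ is an integer, this forces $Y_0=P(x)$, i.e.\ $R(x)=0$.
\begin{itemize}
\item If $r=0$, then $t\neq 0$ and there is no solution with $|x|>X_0$.
\item If $r\ne0$, then $x=-t/r$, so $|x|\le |t|$.
\end{itemize}
Solutions with $|x|\le X_0$ are bounded trivially.

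\emph{Step 3 (main obstacle): the exponent $3$.} The obstacle is the final size bound. The naive estimate $|x|\le|t|$ only gives $O(H^4)$, whereas the threshold $X_0$ itself is easily $O(H^{3/2})$ or better. To reach $26H^3$, I would try two refinements.
\begin{itemize}
\item Exploit the congruence/divisibility $r\mid t$ together with the explicit form of $r$ and $t$ in terms of $a_3,a_2,a_1$. When $|r|$ is small, $t$ is correspondingly constrained.
\item Alternatively, replace the crude comparison with $(P\pm1)^2$ by $(P\pm k)^2$ for a suitable $k$. Then $Y_0=P(x)+j$ with $|j|$ small, and $x$ is recovered from the equation $2jP(x)+j^2=R(x)$. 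For $j\neq0$ this quadratic equation in $x$ has roots of size $O(|r|/|j|+\sqrt{|t|})=O(H^3)$, while $j=0$ gives $R(x)=0$, which is handled as before.
\end{itemize}
Tracking the constants in this last bookkeeping is the step I expect to yield (or fall short of) exactly the constant $26$. If it falls short, I would simply cite \cite{MAS} for the precise constant, since only a polynomial bound in $H(f)$ is needed for the later application.
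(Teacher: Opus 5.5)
The paper gives no proof of this statement: it quotes Masser and uses the result only as a black box (with $H(f)=4$, giving $|x|\le 1664$). Your fallback of citing \cite{MAS} is therefore exactly what the paper does. Runge's method is also the right tool for a self-contained proof, and your use of the discriminant hypothesis, only to exclude $R\equiv 0$, is fine. As written, however, your sketch yields only $|x|=O(H^4)$, because Step 3 misplaces the difficulty.

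First, the square completion is off. Comparing $X^2$-coefficients in $64f=P^2+R$ gives $16a_3^2+16c=64a_2$, so $c=4a_2-a_3^2$. Then $r=8(8a_1-a_3c)$ and $t=64a_0-c^2$.

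Second, the threshold is not $O(H^{3/2})$. $|r|$ can be as large as about $8H^3$, and $|rx+t|<2P(x)-1\approx 16x^2$ forces $|x|\gtrsim |r|/16$, so $X_0\asymp H^3$. Crude bounds on $|r|,|t|,|c|$ give $X_0\le 10H^3$. This region is where the exponent $3$ genuinely comes from. For instance, take $f(X)=X^4+8\ell X^3-6\ell X+1$, so $H=8\ell$. One checks that $x=32\ell^3-8\ell$ gives $f(x)=(x^2+4\ell x-8\ell^2+1)^2$; e.g.\ $f(24)=665^2$. This solution lies below the threshold, with $8y=P(x)+8$.

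Conversely, the case $R(x)=0$ that you single out as the obstacle is harmless, but neither of your refinements actually treats it. Refinement (b) sends $j=0$ straight back to $|x|\le|t|$, and (a) is only a heuristic. Made explicit, (a) splits into three cases:
\begin{itemize}
\item If $a_3=0$, then $r=64a_1$ and $|t|\le 64H+16H^2$, so $|x|=|t/r|\le H+H^2/4$.
\item If $|a_3c|\ge 16H$, then $|8a_1-a_3c|\ge |a_3c|/2$, so $|x|\le (64H+c^2)/(4|a_3c|)\le 1+|c|/4=O(H^2)$.
\item If $a_3\ne 0$ and $|a_3c|<16H$, then $|c|<16H$ and $|x|\le |t|/8\le 8H+32H^2$, since $r$ is a nonzero multiple of $8$.
\end{itemize}
Thus $R(x)=0$ forces $|x|=O(H^2)$. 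Together with $X_0\le 10H^3$, this gives $|x|\le 26H^3$ for $H\ge 2$. The case $H=1$ is checked directly ($|c|\le 5$, $|t|\le 89$). The $(P\pm k)^2$ device is unnecessary.
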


\section{Proof of \thmref{NEW1}}
Before proving \thmref{NEW1}, we mention that there are computer algebra scripts to find elements of $\Q(\zeta_5)$ with a given absolute norm. The details of one such Mathematica script are given in the last section. Using this, we see that for any odd integer $s$,
\begin{equation}\label{2norm}
    N_{\Q(\zeta_5)/\Q}(s+\zeta_5+\zeta_5^3)=E_1(s,1)
\end{equation}
and
\begin{equation}\label{n2}
  N_{\mathbb{Q}(\zeta_5)/\mathbb{Q}}(s+1+\zeta_5)=E_2(s,1) .
\end{equation}
The following propositions are crucial for the proof of \thmref{NEW1}.
\begin{prop}\label{prop1}
    Let $s$ be any odd integer and $p$ be any prime different from $5$, dividing $E_1(s,1)$. Then the congruence $p\equiv 1 \pmod5$ holds.
\end{prop}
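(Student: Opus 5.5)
We will exploit the norm identity \eqref{2norm}, namely $E_1(s,1)=N_{\Q(\zeta_5)/\Q}(s+\zeta_5+\zeta_5^3)$. The idea is that a prime $p\neq 5$ dividing such a norm of a "degree-one-looking" element must have a root of the cyclotomic polynomial $\Phi_5$ modulo $p$. To make this precise, we view $E_1(s,1)$ as a resultant. Since $N_{\Q(\zeta_5)/\Q}(s+\zeta_5+\zeta_5^3)=\prod_{j=1}^{4}(s+\zeta_5^j+\zeta_5^{3j})$, we have
\[
E_1(s,1)=\operatorname{Res}\bigl(\Phi_5(X),\, s+X+X^3\bigr),
\]
up to sign. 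Here $\Phi_5(X)=X^4+X^3+X^2+X+1$ is monic, so this resultant identity holds over $\Z$ and can be reduced modulo $p$.

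Now suppose $p\neq 5$ and $p\mid E_1(s,1)$. Reducing modulo $p$, the resultant vanishes in $\mathbb{F}_p$. Hence $\Phi_5$ and $s+X+X^3$ have a common root $\bar\zeta$ in some finite extension $\mathbb{F}_{p^f}$ of $\mathbb{F}_p$. Since $p\neq 5$, $\Phi_5$ is separable modulo $p$, so $\bar\zeta$ is a primitive $5$th root of unity in $\mathbb{F}_{p^f}$ satisfying
\[
s=-(\bar\zeta+\bar\zeta^3)\in\mathbb{F}_p .
\]
Equivalently, in ideal language, there is a prime $\mathfrak{p}\mid p$ of $\Q(\zeta_5)$ dividing $s+\zeta_5+\zeta_5^3$. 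Its residue field is $\mathbb{F}_{p^f}$, where $f$ is the order of $p$ modulo $5$. The goal is to show $f=1$, i.e.\ $\bar\zeta\in\mathbb{F}_p$.

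The key step is a Galois argument. Since $s\in\mathbb{F}_p$, the element $\bar\zeta+\bar\zeta^3$ is fixed by the Frobenius $x\mapsto x^p$. Frobenius sends $\bar\zeta\mapsto\bar\zeta^{p}$, and $p\bmod 5$ is an element of $(\Z/5\Z)^\times$. We must check that no nontrivial $k\in\{2,3,4\}$ satisfies
\[
\bar\zeta^k+\bar\zeta^{3k}=\bar\zeta+\bar\zeta^3 .
\]
The exponent set $\{1,3\}$ maps to $\{2,1\}$, $\{3,4\}$ and $\{4,2\}$ for $k=2,3,4$ respectively, so in no case is the set preserved. Because $\{1,\bar\zeta,\bar\zeta^2,\bar\zeta^3\}$ is linearly independent over $\mathbb{F}_p$ (as $\Phi_5$ is irreducible of degree $f$ over $\mathbb{F}_p$), the expressions $\bar\zeta^a+\bar\zeta^b$ for distinct sets $\{a,b\}$ are distinct. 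One must be careful here, since the independence is only over $\mathbb{F}_p$ when $f=4$. For smaller $f$ we argue directly instead: the four conjugates $\zeta^j+\zeta^{3j}$ for $j=1,\dots,4$ are distinct in characteristic $p\neq 5$. This follows because their pairwise differences are, up to units, products of factors of the form $1-\zeta^i$, whose norms are powers of $5$. Hence the only Frobenius power fixing $\bar\zeta+\bar\zeta^3$ is the identity, forcing $p\equiv 1\pmod 5$.

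We expect the main obstacle to be exactly this distinctness claim in characteristic $p$. We will handle it by computing $\prod_{j\neq k}\bigl((\zeta^j+\zeta^{3j})-(\zeta^k+\zeta^{3k})\bigr)$, the discriminant of the minimal polynomial of $\zeta_5+\zeta_5^3$ over $\Q$, and checking that it is, up to sign, a power of $5$. This is a routine finite computation (and $\zeta_5+\zeta_5^3$ generates $\Q(\zeta_5)$, so its minimal polynomial has degree $4$). The oddness of $s$ plays no role in this argument.
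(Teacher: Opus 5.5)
Your proof is correct, but it takes a different route from the paper's.

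\textbf{The paper's route.} It argues by contradiction, according to the splitting type of $p$ in $\Q(\zeta_5)$.
If $p$ is inert, then $p$ divides $s+\zeta_5+\zeta_5^3$ in $\Z[\zeta_5]$. This is impossible because the coefficient of $\zeta_5$ is $1$.
If $p=\mathfrak{p}_1\mathfrak{p}_2$, it multiplies the congruence modulo $\mathfrak{p}_2$ by its image under $\tau\colon\zeta_5\mapsto\zeta_5^2$, which carries $\mathfrak{p}_2$ to $\mathfrak{p}_1$. This gives $s^2+(2s-1)\zeta_5+s\zeta_5^2+s\zeta_5^3\equiv 0\pmod{p}$, hence $p\mid s$ and $p\mid 2s-1$.

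\textbf{Your route.} You give one uniform argument. The element $-s=\bar\zeta+\bar\zeta^3\in\mathbb{F}_p$ is fixed by Frobenius, and the four values $\bar\zeta^{j}+\bar\zeta^{3j}$ are distinct. Hence $\bar\zeta^{p}=\bar\zeta$, i.e.\ $p\equiv 1\pmod 5$.

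\textbf{The distinctness step.} The fact you postpone to a ``routine computation'' is already in the paper. We have $E_1(X,1)=\prod_{j=1}^{4}(X+\zeta_5^{j}+\zeta_5^{3j})$, and its discriminant $125$ is recorded in the proof of Theorem~\ref{simul}. Concretely, every difference of conjugates is an associate of $1-\zeta_5$; for example, $(\zeta_5+\zeta_5^3)-(\zeta_5^2+\zeta_5^4)=\zeta_5(1-\zeta_5)(1+\zeta_5^2)$, with $1+\zeta_5^2$ a unit.

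\textbf{A correction.} Since the discriminant argument works for every residue degree $f$, you should delete the detour through linear independence of $1,\bar\zeta,\bar\zeta^2,\bar\zeta^3$. Your parenthetical claim that $\Phi_5$ is irreducible of degree $f$ over $\mathbb{F}_p$ is false unless $f=4$.

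\textbf{What each route buys.} Yours is really Dedekind--Kummer for the generator $-(\zeta_5+\zeta_5^3)$ of $\Z[\zeta_5]$. For $p\nmid 125$, a root of $E_1(X,1)$ modulo $p$ yields a residue-degree-one prime above $p$, hence complete splitting. So it needs no case split, and it proves Proposition~\ref{prop2} verbatim with $1+\zeta_5$ in place of $\zeta_5+\zeta_5^3$; there it is even immediate, since $\bar\zeta=-s-1\in\mathbb{F}_p$. The paper's route is more hands-on and needs no discriminant. However, its $f=2$ case rests on an ad hoc product computation that has to be redone for $E_2$.
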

\begin{proof}We prove this result by showing that any prime divisor $p$, different from 5, of $E_1(s,1)$ splits completely in $\mathbb{Q}(\zeta_5)/\mathbb{Q}$ for any odd integer $s$. Suppose $p$ does not split completely in $\mathbb{Q}(\zeta_5)/\mathbb{Q}$. Therefore the number of prime factors of $p$ in $\mathbb{Q}(\zeta_5)$ is at most 2. 

Let us consider the case if $p\mathbb{Z}[\zeta_5]$ is a prime ideal in $\mathbb{Z}[\zeta_5]$. Since $p$ divides $ E_1(s,1)$, from Eq.(\ref{2norm}), we see that $p\mathbb{Z}[\zeta_5]$ divides $ (s+\zeta_5+\zeta_5^3)^{\sigma}$ for some $\sigma\in \text{Gal}(\mathbb{Q}(\zeta_5)/\mathbb{Q})$. Without loss of generality, we can write
\begin{equation}
    s+\zeta_5+\zeta_5^3 \equiv 0\pmod{p \mathbb{Z}[\zeta_5]}.
\end{equation}
Since $\{1, \zeta_5, \zeta_5^2, \zeta_5^3\}$ forms a $\mathbb{Z}$-basis of $\mathbb{Z}[\zeta_5]$, the above congruence implies that each coefficient on the left-hand side must be divisible by $p$. However, this implies that $p$ divides 1, which is a contradiction.

Now consider the case if $p\mathbb{Z}[\zeta_5]=\mathfrak{p}_1\mathfrak{p}_2$. This implies $\mathfrak{p}_1$ divides $ N_{\mathbb{Q}(\zeta_5)/\mathbb{Q}}(s+\zeta_5+\zeta_5^3)$. Therefore $\mathfrak{p}_1$ contains one of the factors of $ N_{\mathbb{Q}(\zeta_5)/\mathbb{Q}}(s+\zeta_5+\zeta_5^3)$. Thus, we can write 
\begin{equation}\label{eqa}
    s+\zeta_5+\zeta_5^3 \equiv0\pmod { (\mathfrak{p}_1)^{\sigma}},\text{ for some }\sigma\in \text{Gal}(\mathbb{Q}(\zeta_5)/\mathbb{Q}).
\end{equation}
For $\sigma\in \text{Gal}(\mathbb{Q}(\zeta_5)/\mathbb{Q})$, $ (\mathfrak{p}_1)^{\sigma}=\mathfrak{p}_2$ or $ (\mathfrak{p}_1)^{\sigma}=\mathfrak{p}_1$. Without loss of generality, assuming $ (\mathfrak{p}_1)^{\sigma}=\mathfrak{p}_2$ Eq.(\ref{eqa}) reduces to 
\begin{equation}\label{eqaa}
    s+\zeta_5+\zeta_5^3 \equiv0\pmod {\mathfrak{p}_2}.
\end{equation}
Let $\tau\in \text{Gal}(\mathbb{Q}(\zeta_5)/\mathbb{Q})$ such that $\tau : \zeta_5 \rightarrow \zeta_5^2$. Since $\tau$ is an element of order 4 in $\text{Gal}(\mathbb{Q}(\zeta_5)/\mathbb{Q})$ and the decomposition groups of $\mathfrak{p}_1$ and $\mathfrak{p}_2$ are of order 2, $\tau$ can not fix $\mathfrak{p}_1$ or $\mathfrak{p}_2$.
Applying $\tau$ to Eq.(\ref{eqaa}) we get
\begin{equation}\label{eqb}
   s+\zeta_5+\zeta_5^2\equiv 0\pmod{({\mathfrak{p}_2})^{\tau}}.    
\end{equation}
Multiplying Eq.(\ref{eqaa}) and Eq.(\ref{eqb}) we get
\begin{equation}
  s^2+(2s-1)\zeta_5+s\zeta_5^2+s\zeta_5^3  \equiv 0\pmod{p \mathbb{Z}[\zeta_5]}. 
\end{equation}
Since \( \{ 1, \zeta_5, \zeta_5^2, \zeta_5^3 \} \) is a \( \mathbb{Z} \)-basis of \( \mathbb{Z}[\zeta_5] \), \( p \) divides \( s \) and \( 2s - 1 \). Thus, \( p \) divides \( 1 \), which is a contradiction.
\end{proof}
Using Eq.(\ref{n2}), along similar lines, one can obtain the following proposition.
\begin{prop}\label{prop2}
    Let $s$ be any odd integer and $p$ be any prime different from 5, dividing $E_2(s,1)$. Then, the congruence, $p\equiv 1 \pmod5$, holds.
\end{prop}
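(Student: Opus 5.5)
We mirror the proof of Proposition~\ref{prop1}, using Eq.~(\ref{n2}) in place of Eq.~(\ref{2norm}). The element is now $\beta=s+1+\zeta_5$. Let $p\neq 5$ be a prime dividing $E_2(s,1)=N_{\Q(\zeta_5)/\Q}(\beta)$. We show that $p$ splits completely in $\Q(\zeta_5)$, which is equivalent to $p\equiv 1\pmod 5$. Since $p\neq 5$ is unramified, its decomposition group has order $1$, $2$ or $4$. So it suffices to rule out the inert case and the case $p\mathbb{Z}[\zeta_5]=\mathfrak{p}_1\mathfrak{p}_2$.

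\emph{Inert case.} Here $p\mathbb{Z}[\zeta_5]$ is prime and divides some conjugate $\beta^\sigma$. Applying $\sigma^{-1}$ (which fixes $p\mathbb{Z}[\zeta_5]$), we get $s+1+\zeta_5\equiv 0\pmod{p\mathbb{Z}[\zeta_5]}$. In the $\mathbb{Z}$-basis $\{1,\zeta_5,\zeta_5^2,\zeta_5^3\}$, the coefficient of $\zeta_5$ is $1$, so $p\mid 1$, a contradiction. In fact this case can be skipped: $\beta$ has a coefficient equal to $1$, so no rational prime divides $\beta$ or any of its conjugates.

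\emph{Two primes.} The decomposition group is $\{1,\tau^2\}$, where $\tau:\zeta_5\mapsto\zeta_5^2$, and $\tau$ interchanges $\mathfrak{p}_1$ and $\mathfrak{p}_2$. After relabeling, $\beta\in\mathfrak{p}_2$. Then $\tau(\beta)=s+1+\zeta_5^2\in\mathfrak{p}_1$, so $\beta\,\tau(\beta)\in\mathfrak{p}_1\mathfrak{p}_2=p\mathbb{Z}[\zeta_5]$. Compute
\[
(s+1+\zeta_5)(s+1+\zeta_5^2)=(s+1)^2+(s+1)\zeta_5+(s+1)\zeta_5^2+\zeta_5^3 .
\]
The coefficient of $\zeta_5^3$ is $1$, so $p\mid 1$, a contradiction. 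As a fallback, one could instead use $\beta\,\tau^3(\beta)$, with $\tau^3(\zeta_5)=\zeta_5^3$. That gives $(s+1)^2+(s+1)\zeta_5+(s+1)\zeta_5^3+\zeta_5^4$. Rewriting $\zeta_5^4=-1-\zeta_5-\zeta_5^2-\zeta_5^3$ yields coefficients $(s+1)^2-1,\ s,\ -1,\ s$, and the $-1$ again gives the contradiction.

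The only step needing care is that product expansion and its reduction to the integral basis. Here it is easier than in Proposition~\ref{prop1}, since a unit coefficient appears directly, so oddness of $s$ is not even needed. Hence every prime $p\neq5$ dividing $E_2(s,1)$ splits completely, i.e.\ $p\equiv1\pmod5$.
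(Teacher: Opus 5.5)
Your proof is correct and is essentially the paper's own argument: the paper proves Proposition~\ref{prop2} only by saying it follows ``along similar lines'' from Eq.~(\ref{n2}), that is, by repeating the inert/two-prime case analysis of Proposition~\ref{prop1} for $s+1+\zeta_5$, which is exactly what you do. Your expansions of $\beta\,\tau(\beta)$ and $\beta\,\tau^3(\beta)$ are right, and so is your remark that a unit coefficient appears directly, so the oddness of $s$ is not used.
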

The following result describes the P\'olya group of Lecacheux quintic fields and plays an important role in the proof of Theorem \ref{NEW1}.

\begin{thm} \label{NEW1.1}
Let $s$ be any odd integer, and $K_s$ denote the Lecacheux quintic field. Let $E_1=E_1(s,1)=s^4-2s^3+4s^2-3s+1$ and $E_2=E_2(s,1)=s^4+3s^3+4s^2+2s+1$. If $E_1$ and $E_2$ are cube-free, then
\begin{enumerate}[label=(\roman*)]
    \item $K_s$ is a P\'olya field if and only if $s=\pm1$,
    \item
    $\Po(K_s)\cong 
\left(\frac{\mathbb{Z}}{5\mathbb{Z}}\right)^{\omega({E_1E_2})-1}$,
    where $\omega(t)$ denotes the number of distinct prime divisors of $t$.
\end{enumerate}
\end{thm}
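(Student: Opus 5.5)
The plan is to compute $|\Po(K_s)|$ from Corollary \ref{cor2}, show that $\Po(K_s)$ is killed by $5$, and then read off (i) from the formula in (ii).

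\textbf{Order of $\Po(K_s)$.} By Lemma \ref{lem1}, $K_s$ is a cyclic quintic field. Every ramified prime is therefore totally ramified with $e_p=5$, and Corollary \ref{cor2} gives $|\Po(K_s)|=5^{r-1}$, where $r$ is the number of ramified primes, i.e.\ the number of primes dividing the conductor $f(K_s)$. I will show $r=\omega(E_1E_2)$ using Theorem \ref{cond} with $u=s$, $v=1$.

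\begin{itemize}
\item \emph{Primes $q\neq 5$ dividing $E_1E_2$.} By Propositions \ref{prop1} and \ref{prop2}, every such $q$ satisfies $q\equiv 1\pmod 5$. Since $E_1$ and $E_2$ are cube-free, $1\le v_q(E_1E_2)\le 4$, so $v_q(E_1E_2)\not\equiv 0\pmod 5$. Hence every such prime divides $f(K_s)$ and ramifies.
\item \emph{The prime $5$.} Lemma \ref{newlem}, together with $E_1\equiv E_2 \pmod 5$, shows that $5\mid E_1E_2$ if and only if $s\equiv 3\pmod 5$, which is exactly when $5$ ramifies.
\item \emph{No other primes.} Theorem \ref{cond} shows that no prime outside $E_1E_2$ ramifies.
\end{itemize}

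Thus $r=\omega(E_1E_2)$ and $|\Po(K_s)|=5^{\omega(E_1E_2)-1}$.

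\textbf{Structure of $\Po(K_s)$.} By (\ref{2ambi}), $\Po(K_s)$ is generated by classes of ambiguous ideals. In a cyclic extension of $\Q$, every ambiguous ideal is a product of a rational ideal, which is principal, and powers of the ramified primes $\mathfrak p$ with $\mathfrak p^5=p\mathcal O_{K_s}$. Hence every generator has order dividing $5$. Since $\Po(K_s)$ is abelian, it is an elementary abelian $5$-group, and comparing orders gives $\Po(K_s)\cong(\Z/5\Z)^{\omega(E_1E_2)-1}$. This proves (ii).

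\textbf{Proof of (i).} By (ii), $K_s$ is P\'olya if and only if $\omega(E_1E_2)=1$; note that $\omega\ge 1$ automatically, since the order formula must be an integer. For $s=1$ we have $E_1=1$ and $E_2=11$, and for $s=-1$ we have $E_1=11$ and $E_2=1$, so both fields are P\'olya.

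For the converse, take an odd $s$ with $|s|\ge 3$. Both quartics are then $>1$ (they are positive definite with value $>1$ away from small $s$), so $\omega(E_1E_2)=1$ would force $E_1$ and $E_2$ to be powers of a single prime $q$. By cube-freeness this means $E_1,E_2\in\{q,q^2\}$, and $q$ divides both. Then $q$ divides the resultant $\mathrm{Res}(E_1,E_2)$, a fixed integer that I will compute; I expect it to involve only $5$ and perhaps $11$. So $q$ lies in an explicit finite set, and each of $E_1$, $E_2$ equals $q$ or $q^2$ for such a $q$. Each equation $E_i(s,1)=c$ with $c$ fixed has finitely many roots, and the cases $E_i(s,1)=q^2$ can be bounded uniformly through the curve $Y^2=E_i(X,1)$ via Theorem \ref{mass}. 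This requires checking that the discriminant of $E_i(X,1)$ is not a square, and then gives $|s|\le 26\cdot 4^3$. A finite check of the remaining $s$ then excludes everything except $s=\pm1$.

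\textbf{Main obstacle.} This last step is the hardest: the resultant computation, the discriminant non-square check, and a clean finite verification. If the resultant turns out messier than expected, my fallback is to use the norm forms (\ref{2norm}) and (\ref{n2}) to show directly that $\gcd(E_1,E_2)$ divides a power of $5$. I would then handle $q=5$ separately, using $v_5(E_i)\le 2$ and a congruence argument modulo $25$.
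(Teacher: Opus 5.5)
Your proposal is correct. Part (ii) matches the paper: the ramified primes are exactly the prime divisors of $E_1E_2$ by Theorem~\ref{cond}, Propositions~\ref{prop1}, \ref{prop2} and Lemma~\ref{newlem}; Corollary~\ref{cor2} gives the order; and exponent $5$ comes from ambiguous ideals. The paper writes $[\mathfrak J]^5=[\mathfrak J]^{1+\sigma+\cdots+\sigma^4}=[N(\mathfrak J)\mathcal O_{K_s}]$, while you use the explicit shape of ambiguous ideals; the two are interchangeable.

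The real difference is the ``only if'' direction of (i).
\begin{itemize}
\item The paper rules out $E_i=p^2$ by bounding all integral points on $Y^2=E_i(X,1)$ with Masser's theorem and searching $|x|\le 1664$ by computer. It then rules out $E_1=E_2=p$ from $E_1\neq E_2$.
\item Your resultant idea gives strictly more and makes Masser unnecessary. Since $E_2-E_1=5s(s^2+1)$, a common prime $q$ cannot divide $s$, because $E_1(s)\equiv 1\pmod q$. If $q\mid s^2+1$, then $E_1(s)\equiv -(s+2)\pmod q$, so $q\mid (s^2+1)-(s+2)(s-2)=5$. Equivalently, $\operatorname{Res}(E_1,E_2)=5^5$, so your fallback is not needed.
\end{itemize}
Hence the only possible common prime is $q=5$. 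Cube-freeness then forces $E_1,E_2\in\{5,25\}$, which is impossible for odd $|s|\ge 3$ since both values exceed $25$ there. So you can drop the Masser step and the discriminant check entirely.

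What your route buys is a few lines of hand computation in place of determining all integral points on two genus-one quartic curves. It also gives the structural fact that $\gcd(E_1(s,1),E_2(s,1))$ is a power of $5$. That fact is exactly what justifies the inequality $\omega(E_1E_2)\ge\omega(E_1)+\omega(E_2)-1$, which the paper uses without comment in the proof of Theorem~\ref{NEW1}.
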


\begin{proof} Since \( E_1 \) and \( E_2 \) are assumed to be cube-free, using \lemref{lem1} and \thmref{cond} we find that the conductor of $K_s$ is
$$f(K_s)= 5^{\alpha}\prod\limits_{\substack{q\equiv 1\pmod 5 \\ q\mid E_1E_2} } q,$$
 where $q$ runs through primes, and
 $$\alpha=\begin{cases}
     0, \text{ if } 2s-1\not\equiv 0 \pmod5,\\
     2, \text{ if } 2s-1 \equiv 0 \pmod 5.
 \end{cases}$$

Thus, using \lemref{newlem}, \propref{prop1} and \propref{prop2}, we see that the primes ramifying in the extension $K_s/\Q$ are precisely the prime divisors of $E_1E_2$. Further, the ramification index of each such prime is 5. 

From \corref{cor2} it follows that $|\Po(K_s)|=5^{\omega(E_1E_2)-1}$. Therefore, $K_s$ is a P\'olya field if and only if $f(K_s)$ is a prime power. If $s=\pm1$, then $E_1E_2=11$. From \corref{cor2} we get $|\Po(K_{\pm1})|=1$. This implies $K_1$ and $K_{-1}$ are P\'olya fields. Now suppose that $s\neq\pm1$. We claim that $E_1E_2$ can not be a prime power. Suppose $E_1E_2$ is a prime power then $E_1=p^a$ and $E_2=p^b$ for some prime $p$ and positive integers $a$ and $b$. Since $E_1$ and $E_2$ are cube-free this implies $a,b<3$. Now consider the curves
\begin{align}\label{curve1}
 Y^2&=f(X)=X^4-2X^3+4X^2-3X+1\\\label{curve2}
 Y^2&=g(X)=X^4+3X^3+4X^2+2X+1.   
\end{align}
From \thmref{mass}, we see that any integral solution $(x,y)$ of (\ref{curve1}) and (\ref{curve2}) satisfies 
$$|x|\leq 26\times4^3=1664.$$
Using a SageMath program we find that for $x\in[-1664,1664]$  the only integral solutions to the curve $Y^2=f(X)$ are $(x,y) = (0, 1)$ and $(1,1)$ and the only solutions to the curve $Y^2=g(X)$ are $(x,y) = (-4, 11)$, $(-1,1)$ and $(0,1)$. In particular, it is not possible that  $E_1=p^2$ or $E_2=p^2$ for some odd integer $s\neq\pm1$. Thus  $a,b<2$. Next we observe that $E_1<E_2$ whenever $s>0$, and $E_1>E_2$ whenever $s<0$. Thus, $E_1\neq E_2$ for all odd integers $s$ and hence $a=b=1$ is not possible. This contradicts the fact that $a$ and $b$ are positive integers. This establishes the claim. Consequently $5\mid |\Po(K_s)|$. This proves $(i)$ of \thmref{NEW1}.
 
We have $\text{Gal}(K_s/\mathbb{Q})\simeq \mathbb{Z}/5\mathbb{Z}$. Let $\sigma$ be a generator of $\text{Gal}(K_s/\mathbb{Q})$ and $[\mathfrak{J}]\neq [1]$ be an ambiguous ideal class of $K_s$. Then we have,
\begin{align*}
    [\mathfrak{J}]^5&=[\mathfrak{J}][\mathfrak{J}][\mathfrak{J}][\mathfrak{J}][\mathfrak{J}]\\    
    &=[\mathfrak{J}][\mathfrak{J}]^\sigma[\mathfrak{J}]^{\sigma^2}[\mathfrak{J}]^{\sigma^3}[\mathfrak{J}]^{\sigma^4}\\
    &=[\beta \mathcal{O}_{K_s}]\\
    &=[(1)]
\end{align*}
where $|\beta|=N(\mathfrak{J})=[\mathcal{O}_{K_s}:\mathfrak{J}]$ denotes the norm of the ideal $\mathfrak{J}$ and $[(1)]$ denotes the trivial ideal class of $\text{Cl}({K_s})$. We conclude that the order of any non-trivial ambiguous ideal class in the class group of $K_s$ is $5$. Using (\ref{2ambi}) and structure theorem for finite abelian groups we obtain
$$\Po(K_s)\cong
\left(\frac{\mathbb{Z}}{5\mathbb{Z}}\right)^{\omega({E_1E_2})-1}.$$
This finishes the proof of $(ii)$ of \thmref{NEW1}.
\end{proof}
The following theorem is another crucial ingredient for the proof of \thmref{NEW1}.

\begin{thm}\label{simul}
Let $f(X)=X^{4}-2X^{3}+4X^{2}-3X+1$ and $g(X)=X^{4}+3X^{3}+4X^{2}+2X+1.$ Then the lower natural density of the set 
\[
\mathcal C=\{n\in\mathbb{Z}_{>0} : n \text{ is odd and  both } f(n)\ \text{and}\ g(n)\ \text{are cube-free}\}
\]
 is positive. In particular, there exist infinitely many odd integers $n$ for which both $f(n)$ and $g(n)$ are cube-free.
\end{thm}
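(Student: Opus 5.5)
The plan is to apply Hooley's theorem (\thmref{Hooley}) to a single polynomial of degree $r=4$, for which $(r-1)$-power-free means cube-free. Restricting to odd $n$ and demanding that two quartics be cube-free simultaneously seems to require degree $8$, for which Hooley would only give $7$-power-free values. So I would not multiply $f$ and $g$. Instead I will use a complementary counting argument: count odd $n\le x$ with $f(n)$ cube-free, count odd $n\le x$ with $g(n)$ cube-free, and show that the union of the two exceptional sets has density strictly less than $1/2$.

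First I substitute $n=2m+1$ and set $F(m)=f(2m+1)$ and $G(m)=g(2m+1)$. These are quartics in $\Z[m]$, irreducible since $f$ and $g$ are (they are norms of degree-$4$ elements of $\Q(\zeta_5)$ by (\ref{2norm}) and (\ref{n2})). They have no fixed cube divisor:
\begin{itemize}
\item $F(0)=f(1)=1$ and $G(0)=g(1)=11$;
\item since $f(-1)=11$, $F$ takes values $1$ and $11$, whose gcd is $1$;
\item for $G$ we have $G(-1)=g(-1)=1$.
\end{itemize}
Hooley then gives the density of $m\le x$ with $F(m)$ cube-free as $c_F=\prod_q\bigl(1-\rho_F(q^3)/q^3\bigr)$, and similarly $c_G$. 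The density of $m$ with both cube-free is at least $c_F+c_G-1$. So it suffices to show $c_F+c_G>1$; the set $\mathcal C$ then has lower density at least $(c_F+c_G-1)/2>0$ among positive integers.

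The main task is bounding $c_F$ and $c_G$ from below. By \propref{prop1} and \propref{prop2}, every prime divisor $q\neq 5$ of $F(m)$ or $G(m)$ satisfies $q\equiv1\pmod 5$. So the local factor is $1$ except at $q=5$ and at $q\equiv 1\pmod 5$, i.e.\ $q=11,31,41,61,\dots$. At $q=2$ the factor is $1$ since the values are odd. At $q=5$, $5\mid E_i$ exactly when $s\equiv3\pmod5$; checking whether $125\mid E_i$ is a finite computation, and in any case $\rho(125)/125\le 1/25$ by Hensel-type counting. For $q\equiv1\pmod5$ with $q\nmid\mathrm{disc}$, Hensel lifting gives $\rho(q^3)=\rho(q)\le4$, so the factor is at least $1-4/q^3$. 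Primes dividing the discriminants need a separate bounded check; I expect these to be only $5$ and possibly $11$, and I would compute the discriminants to confirm. Then
\[
1-c_F\le \frac1{25}+\sum_{q\equiv1\ (5)}\frac{4}{q^3}\le \frac1{25}+\frac{4}{1331}+\frac{4}{29791}+\cdots,
\]
which is much less than $1/2$, and similarly for $G$. Hence $c_F+c_G>1$.

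The main obstacle is making the local computations at $q=5$ and at any prime dividing the discriminant rigorous, since Hensel's lemma fails there and $\rho(q^3)$ could exceed $\rho(q)$. There I would use the crude bound $\rho(q^3)\le q^2\rho(q)$ or compute directly. This costs at most $\rho(q)/q$ in the local factor, which is $4/11$ at $q=11$ and still too large to sum safely. So I would explicitly compute $\rho_F(11^3)$, $\rho_G(11^3)$ and the values mod $125$ by a short computer check, in the spirit of the paper's SageMath computations. Since $f$ and $g$ are norms from $\Q(\zeta_5)$ of elements of the form $n+\text{unit}$ combinations, I expect only $5$ to divide the discriminants, which makes everything elementary.
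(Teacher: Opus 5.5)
Your proposal is correct, but it takes a different route from the paper.

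\textbf{How the two arguments differ.} The paper applies Hooley's theorem to $f$ and $g$ over all positive integers. It shows each density exceeds $0.945$, using $\operatorname{disc}(f)=\operatorname{disc}(g)=125$, Hensel lifting at every $q\neq5$, and $\rho(q^3)=0$ for $q\le5$. From this it gets that the set $\mathcal A$ has lower density at least $c_f+c_g-1>1/2$. Only then does it discard the even integers, which leaves lower density at least $c_f+c_g-3/2$ for $\mathcal C$.

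You instead parametrize the odd integers as $2m+1$ before invoking Hooley. You therefore only need $c_F+c_G>1$, and you obtain the bound $(c_F+c_G-1)/2$.

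Since $m\mapsto 2m+1$ is a bijection modulo every odd prime power, and $f,g$ take only odd values, in fact $c_F=c_f$ and $c_G=c_g$. Two things follow:
\begin{itemize}
\item your constant is never worse than the paper's, because $(c+c'-1)/2\ge c+c'-3/2$ whenever $c+c'\le2$;
\item your numerical requirement (each constant $>1/2$) is much weaker than the paper's ($c_f+c_g>3/2$).
\end{itemize}
The price is re-checking Hooley's hypotheses for the shifted polynomials. You do this correctly via $F(0)=1$ and $G(-1)=1$.

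Your use of Propositions~\ref{prop1} and~\ref{prop2} to restrict to $q\equiv1\pmod5$ is valid but unnecessary. Even summing $4/q^3$ over all primes $q\ge7$ gives at most $1/18$.

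\textbf{Two local corrections.}

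First, the claim that $\rho(125)/125\le1/25$ ``in any case by Hensel-type counting'' is not justified. We have $f\equiv g\equiv(X+2)^4\pmod5$, so the unique root modulo $5$ is a $4$-fold root and Hensel does not apply. The crude bound gives only $\rho(125)/125\le\rho(5)/5=1/5$. That still suffices for your argument, since $1/5+\sum_{q\ge7}4/q^3<1/2$. In fact $\rho(25)=0$: both $f(X-2)$ and $g(X-2)$ are Eisenstein at $5$, with constant terms $55$ and $5$. This is what lies behind the paper's assertion that $\rho(q^3)=0$ for $q\le5$.

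Second, your worry about $11$ is unfounded. The polynomials $f$ and $g$ are the minimal polynomials of $-(\zeta_5+\zeta_5^3)$ and $-1-\zeta_5$. Both of these elements generate $\Z[\zeta_5]$; for $\zeta_5+\zeta_5^3$, the change-of-basis determinant to $1,\zeta_5,\zeta_5^2,\zeta_5^3$ is $1$. Hence both discriminants equal $125$, and $\operatorname{disc}(F)=\operatorname{disc}(G)=2^{12}\cdot125$. So only $q=5$ needs separate treatment, together with the trivial case $q=2$, where the values are odd.
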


\begin{proof}
Consider the following sets
\[
\mathcal A_f=\{n\in\mathbb{Z}_{>0} : f(n)\ \text{is cube-free}\},\qquad
\mathcal A_g=\{n\in\mathbb{Z}_{>0} : g(n)\ \text{is cube-free}\}.
\]
By \thmref{Hooley}, since $f$ and $g$ are irreducible of degree $4$ and have no fixed prime divisor, both $A_f$ and $A_g$ have natural densities
\[
c_f=\prod_{q}\left(1-\frac{\rho_f(q^3)}{q^3}\right),\qquad
c_g=\prod_{q}\left(1-\frac{\rho_g(q^3)}{q^3}\right),
\]
where
$\rho_h(m)=\#\{a \bmod m :\ m \mid h(a)\}$.
We now bound these densities from below. Since
\[
\operatorname{disc}(f)=\operatorname{disc}(g)=125,
\]
for every prime $q\neq5$ the reduction of each polynomial modulo $q$ is separable. Hence, every root modulo $q$ is simple and lifts uniquely to a root modulo $q^3$ by Hensel’s lemma and therefore
\[
\rho_h(q^3)\le 4 \qquad (q\neq5,\ h=f,g).
\]
For the primes $q\le 5$, a direct calculation gives
\[
\rho_f(q^3)=\rho_g(q^3)=0.
\]
Consequently, for $h=f,g$,
\[
c_h \ge \prod_{q>5}\left(1-\frac{4}{q^3}\right).
\]
Let
\[
t_q=\frac{4}{q^3} \qquad (q>5).
\]
Then \(0<t_q<1/2\), so
\[
\log(1-t_q)\ge -t_q-t_q^2.
\]
Hence
\[
\prod_{q>5}\left(1-\frac{4}{q^3}\right)
\ge
\exp\left(
-\sum_{q>5}\frac{4}{q^3}
-\sum_{q>5}\frac{16}{q^6}
\right).
\]
Now
\[
\sum_{q>5}\frac{4}{q^3}
\le
4\sum_{n=7}^{\infty}\frac1{n^3}
\le
4\int_6^\infty \frac{dx}{x^3}
=
\frac1{18},
\]
and
\[
\sum_{q>5}\frac{16}{q^6}
\le
16\sum_{n=7}^{\infty}\frac1{n^6}
\le
16\int_6^\infty \frac{dx}{x^6}
=
\frac{16}{5\cdot 6^5}
<0.0007.
\]
Therefore
\[
c_h
\ge
\exp\left(-\frac1{18}-0.0007\right)
>0.945.
\]
In particular,
\[
c_f+c_g>1.
\]
Consider the set
\[
\mathcal A=\{n\in\mathbb{Z}_{>0} : f(n)\ \text{and}\ g(n)\ \text{are both cube-free}\}.
\]
Then, for every $x$,
$|\mathcal A\cap[1,x]|\ge |\mathcal A_f\cap[1,x]|+|\mathcal A_g\cap[1,x]|-x$,
and hence
\[
\liminf_{x\to\infty} \frac{|\mathcal A\cap[1,x]|}{x}\ge c_f+c_g-1>\frac{1}{2}.
\]
Thus, the lower natural density of $\mathcal A$ is positive, and is greater than $1/2$. Consequently,
%Now, since the odd integers have natural density $1/2$ and $\mathcal A$ has lower density exceeding $1/2$, the set of odd integers in $\mathcal A$ must have positive lower density; indeed, if 
%\[
%\mathcal C=\{n\in\mathbb{Z}_{>0} : n \text{ is odd and  %both } f(n)\ \text{and}\ g(n)\ \text{are cube-free}\},
%\]
%then
\[
\liminf_{x\to\infty} \frac{|\mathcal C\cap[1,x]|}{x}
\ge \liminf_{x\to\infty} \frac{|\mathcal A\cap[1,x]|}{x} - \frac12 > 0.
\]
Therefore the odd integers $n$ with $f(n)$ and $g(n)$ both cube-free form a set of positive lower density, which implies the existence of infinitely many such odd integers.
\end{proof}

\begin{proof}[Proof of $\thmref{NEW1}$] Let $\mathcal{C}$ be as in \thmref{simul}. Then the lower natural density of the set $\mathcal C$ is positive. That is, there exists $\delta>0$ such that
\[
\liminf_{x\to\infty} \frac{|\mathcal C\cap[1,x]|}{x} = \delta > 0.
\]
Fix $k \geq 1$. Now, applying \thmref{dist} to $E_1$ and $E_2$, we obtain that both of the following sets 
    \[
    \mathcal B_k^{(1)} = \{n\in\mathbb{Z}_{>0} : \omega(E_1(n)) \ge k+1 \,\}\quad
   \text{ and } \quad
    \mathcal B_k^{(2)} = \{n\in\mathbb{Z}_{>0} : \omega(E_2(n)) \ge 1 \,\}
    \]
    have natural density $1$. Consequently the intersection $\mathcal B_k := \mathcal B_k^{(1)} \cap \mathcal B_k^{(2)}$ also has density $1$. For every sufficiently large integer $n\in B_k$, we have
\[
\omega(E_1(n))\ge k+1
\qquad\text{and}\qquad
\omega(E_2(n))\ge1.
\]
Therefore,
\[
\omega(E_1(n)E_2(n))
\ge
\omega(E_1(n))+\omega(E_2(n))-1
\ge k+1.
\]
Now consider $\mathcal C \cap \mathcal B_k$. Since $\mathcal B_k$ has density $1$, its complement has density $0$. Therefore
\[
\liminf_{x\to\infty} \frac{|\mathcal C\cap\mathcal B_k\cap[1,x]|}{x}
\ge \liminf_{x\to\infty} \frac{|\mathcal C\cap[1,x]|}{x} - \limsup_{x\to\infty} \frac{|\mathcal B_k^c\cap[1,x]|}{x} = \delta - 0 = \delta > 0.
\]
Thus $\mathcal C\cap\mathcal B_k$ has positive density; in particular, it is infinite. Take any $s\in\mathcal C\cap\mathcal B_k$. Then by Theorem~\ref{NEW1.1} we obtain
$
(\Z/5\Z)^k \subseteq \Po(K_s).
$
This completes the proof.
    
\end{proof}

As an application of \thmref{NEW1}, we obtain the following corollary.

\begin{cor}\label{cor1}
Let \(d=5t\) with \(5\nmid t\). Then there exist infinitely many Galois extensions \(L/\mathbb{Q}\) of degree \(d\) whose P\'olya groups have arbitrarily large \(5\)-rank.
\end{cor}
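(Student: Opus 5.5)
The strategy is to take compositum fields $L=K_sF$, where $K_s$ is a Lecacheux quintic field with large P\'olya group and $F$ is a cyclic (or at least Galois) extension of degree $t$ with $5\nmid t$. We then push the $5$-part of $\Po(K_s)$ into $\Po(L)$. Concretely, fix $d=5t$ and a Galois field $F/\mathbb{Q}$ of degree $t$, say a cyclic field of degree $t$ inside some $\mathbb{Q}(\zeta_\ell)$. Given $k$, \thmref{NEW1} together with \thmref{NEW1.1} yields infinitely many odd $s$ with $E_1(s,1),E_2(s,1)$ cube-free and $\omega(E_1E_2)-1\ge k$. Since $K_s$ and $F$ are Galois of coprime degrees, $K_s\cap F=\mathbb{Q}$. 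Hence $L=K_sF$ is Galois of degree $5t$, with $\Gal(L/\mathbb{Q})\cong \Z/5\Z\times\Gal(F/\mathbb{Q})$. Distinct $s$ give distinct conductor sets, so infinitely many distinct $K_s$ occur. Each $L$ contains only finitely many quintic subfields, so this gives infinitely many distinct $L$.

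Next we show that the capitulation map $j:\Cl(K_s)\to\Cl(L)$ is injective on $5$-parts. The composition $N_{L/K_s}\circ j$ is multiplication by $[L:K_s]=t$, which is invertible on the $5$-group $\Po(K_s)\cong(\Z/5\Z)^{\omega(E_1E_2)-1}$. Therefore $j$ restricted to $\Po(K_s)$ is injective.

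Finally, we check that $j(\Po(K_s))\subseteq\Po(L)$. By (\ref{2ambi}), generalized in Zantema's form (for Galois $L$, $\Po(L)$ is generated by classes of ambiguous ideals, i.e. products of the ideals $\prod_{\mathfrak P\mid p}\mathfrak P$), it suffices to treat generators. For a prime $p$ ramified in $K_s$, write $\Pi_p(K_s)=\prod_{\mathfrak p\mid p}\mathfrak p$. Its extension to $L$ is $\Pi_p(L)^{e}$ for some $e$, where $e$ is the ramification index of $L/K_s$ above $p$. Alternatively, and more simply, $\Pi_p(K_s)\mathcal{O}_L$ is $\Gal(L/\mathbb{Q})$-invariant, since it is the extension of a $\Gal(K_s/\mathbb{Q})$-stable ideal and $\Gal(L/\mathbb{Q})$ acts on $K_s$ through $\Gal(K_s/\mathbb{Q})$. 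Any invariant ideal is a product of the $\Pi_q(L)$, so its class lies in $\Po(L)$. Hence $\Po(L)$ contains a subgroup isomorphic to $(\Z/5\Z)^{\omega(E_1E_2)-1}$, and its $5$-rank is at least $k$. Letting $k$ grow gives arbitrarily large $5$-rank.

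The main obstacle is the last step: making the invariance argument precise and justifying Zantema's description of $\Po(L)$ for non-cyclic Galois $L$. If needed, I would simply cite \cite{ZAN}, which holds for all Galois $K/\mathbb{Q}$. The injectivity step is routine by the norm argument.
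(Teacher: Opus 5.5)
Your proof is correct, but it takes a different route from the paper.

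\textbf{The paper's route.} The paper first builds an auxiliary field $F$ of degree $t$ that is itself P\'olya: a compositum of cyclic fields of degree $p_i^{e_i}$, each totally ramified at a single prime $q_i$. It then cites Chabert's compositum theorem to get $\Po(K_sF)\cong\Po(K_s)\oplus\Po(F)\cong\Po(K_s)$.

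\textbf{Your route.} You allow any Galois $F$ of degree $t$ and prove the needed embedding directly.
\begin{enumerate}
\item $N_{L/K_s}\circ j$ is raising to the $t$-th power. This is bijective on the elementary abelian $5$-group $\Po(K_s)$ because $5\nmid t$, so $j$ is injective there.
\item $j(\Po(K_s))\subseteq\Po(L)$, because the extension of a $\Gal(K_s/\Q)$-stable ideal is $\Gal(L/\Q)$-stable, and Zantema's description of $\Po$ holds for every Galois field.
\end{enumerate}

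\textbf{What each approach buys.} Your argument uses nothing beyond $5\nmid t$ and needs no P\'olya property of $F$. It also avoids relying on a direct-sum statement which, in general, needs more than linear disjointness. For example, $\Q(i)$ is P\'olya and $\Q(i)\cap\Q(\sqrt{-5})=\Q$, yet $\Q(i,\sqrt{5})$ has class number $1$ while $\Po(\Q(\sqrt{-5}))\cong\Z/2\Z$.

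In exchange, the paper's route gives the exact equality $\Po(L)\cong\Po(K_s)$ rather than a lower bound on the $5$-rank. Your argument recovers this too if you take $F$ cyclic inside $\Q(\zeta_\ell)$, as you propose:
\begin{enumerate}
\item $L$ is then cyclic of degree $5t$.
\item Inertia groups split as products because the degrees are coprime, so $e_p(L)=e_p(K_s)e_p(F)$.
\item Hence $\prod_p e_p(L)=5^{\omega(E_1E_2)}\,t$.
\item \propref{CHABERT} then gives $|\Po(L)|\le 5^{\omega(E_1E_2)-1}$, so your injection is also onto.
\end{enumerate}
The paper's $F$ is in fact cyclic as well, since the $p_i^{e_i}$ are pairwise coprime.

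\textbf{A small point.} Your assertion that distinct $s$ give distinct conductor sets is not proved. You do not need it: fields whose P\'olya groups have unbounded $5$-rank are automatically infinitely many.
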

\begin{proof}
First, we construct an auxiliary P\'olya field $F$ of degree $t$. 
Write
\[
t=\prod_{i=1}^r p_i^{e_i},
\]
where the $p_i$ are distinct primes and $p_i\neq 5$ for all $i$.
For each $i$, choose a prime $q_i$ such that
\[
q_i\equiv 1 \pmod{p_i^{e_i}}.
\]
%Such primes exist by Dirichlet's theorem on arithmetic progressions, and there are infinitely many choices for each $i$. 
Let $F_i$ be the unique subfield of $\Q(\zeta_{q_i})$ of degree $p_i^{e_i}$. The only prime ramified in $F_i$ is $q_i$, and it is totally ramified. Since
\[
\Gal(\Q(\zeta_{q_i})/\Q)\cong (\Z/q_i\Z)^\times
\]
is cyclic of order $q_i-1$, the field $F_i/\Q$ is cyclic of degree $p_i^{e_i}$. By \propref{CHABERT}, each $F_i$ is a P\'olya field.
Now consider
\[
F=F_1F_2\cdots F_r.
\]
Since the conductors $q_1,\dots,q_t$ are pairwise distinct prime numbers, the fields $F_i$ are disjoint over $\Q$. Moreover, by \cite[Theorem~3.7]{ChabertPolya}, the compositum of linearly disjoint P\'olya fields is again a P\'olya field. Hence $F$ is a P\'olya field. Hence
\[
[F:\Q]=\prod_{i=1}^r [F_i:\Q]
=\prod_{i=1}^r p_i^{e_i}
=t.
\]
Finally, since there are infinitely many admissible choices for the primes \(q_i\), this construction yields infinitely many pairwise distinct fields \(F\). Fix one such field and denote it by \(F\).

Fix $k\ge 1$. By Theorem~\ref{NEW1}, there are infinitely many odd integers $s$ such that $(\Z/5\Z)^k \subseteq \Po(K_s)$. Now consider the field
\[
L=K_sF.
\]
Note that
\[
K_s\cap F=\Q \quad\text{ and, hence,  }\quad [L:\Q]=[K_s:\Q]\,[F:\Q]=5t.
\]
Since $K_s$ and $F$ are linearly disjoint, again by \cite[Theorem~ 3.7]{ChabertPolya} we have
\[
\Po(L)\cong \Po(K_s)\oplus \Po(F).
\]
Because $F$ is a P\'olya field, $\Po(F)$ is trivial, and therefore
\[
\Po(L)\cong \Po(K_s).
\]
In particular,
$(\Z/5\Z)^k \subseteq \Po(K_s)\cong \Po(L)$, so $L$ has large P\'olya group.
\end{proof}

The following fundamental theorem of Golod and Shafarevich plays a central role in the study of class field towers and will be needed in the proof of \corref{CFT}.

\begin{thm}\label{InfiniteTower}\cite{ESG}
Let $K$ be a number field, and let $\ell$ be a prime number. If
\[
\operatorname{rk}_{\ell}\bigl(\Cl(K)\bigr)
>
2+2\sqrt{\operatorname{rk}\!\left(\mathcal{O}_K^{\times}\right)+1},
\]
where $\operatorname{rk}_{\ell}(\Cl(K))$ denotes the $\ell$-rank of the class group $\Cl(K)$.Then the $\ell$-class field tower of $K$ is infinite.
\end{thm}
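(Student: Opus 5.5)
The plan is to follow the classical group-theoretic route: reduce the existence of an infinite tower to a relation count for a finite $\ell$-group. Let $K_\infty$ be the maximal unramified pro-$\ell$ extension of $K$, the union of the $\ell$-class field tower, and let $G=\Gal(K_\infty/K)$. I argue by contradiction: assume the tower is finite, so that $G$ is a finite $\ell$-group. Throughout, I read $\operatorname{rk}(\mathcal{O}_K^{\times})$ as $\dim_{\mathbb{F}_\ell}\mathcal{O}_K^{\times}/(\mathcal{O}_K^{\times})^{\ell}$. This equals $r_1+r_2-1+\delta$, where $\delta=1$ if $\zeta_\ell\in K$ and $\delta=0$ otherwise. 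This is the normalization under which the stated inequality comes out exactly.

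\textbf{Step 1 (generators).} By class field theory, $G^{\mathrm{ab}}$ is the Galois group of the $\ell$-Hilbert class field, which is $\Cl(K)\otimes\Z_\ell$. By the Burnside basis theorem, the minimal number of generators of $G$ is $d(G)=\dim_{\mathbb{F}_\ell}H^1(G,\mathbb{F}_\ell)=\operatorname{rk}_\ell(\Cl(K))$. Write $d$ for this number.

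\textbf{Step 2 (relations).} I will bound $r(G)=\dim H^2(G,\mathbb{F}_\ell)$ by Shafarevich's estimate $r(G)-d(G)\le \operatorname{rk}(\mathcal{O}_K^{\times})$. The method is to compute with the cohomology of $G$ acting on the idèle class group of $K_\infty$ (equivalently, on ideals and units of the finite layers). Unramifiedness forces the local inertia contributions to vanish. The Hochschild--Serre and Tate cohomology exact sequences then bound $H^2(G,\mathbb{F}_\ell)$ modulo the image of $H^1$ by $\mathcal{O}_K^{\times}/(\mathcal{O}_K^{\times})^{\ell}$. The $\delta$ term, coming from $\ell$-th roots of unity, is absorbed by my reading of the unit rank.

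\textbf{Step 3 (Golod--Shafarevich inequality).} For a finite $\ell$-group $G\neq 1$, the relation number satisfies $r(G)>d(G)^2/4$. The standard proof works in the completed group algebra $\mathbb{F}_\ell[[G]]$ with the augmentation filtration. Take a minimal presentation $1\to R\to F\to G\to 1$ with $F$ free pro-$\ell$ on $d$ generators. Comparing Hilbert series of the graded algebras then gives the coefficientwise power-series inequality
\[
H_G(t)\,(1-dt+rt^2)\ \ge\ 1,
\]
in the refined Vinberg form. Since $G$ is finite, $H_G$ is a polynomial with positive values on $(0,1)$. It follows that $1-dt+rt^2>0$ for all $t\in(0,1)$. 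Evaluating at $t=2/d$ gives $r>d^2/4$.

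\textbf{Step 4 (conclusion).} Combining Steps 2 and 3,
\[
\frac{d^2}{4}<d+\operatorname{rk}(\mathcal{O}_K^{\times}),
\]
that is, $d^2-4d-4\operatorname{rk}(\mathcal{O}_K^{\times})<0$. This forces $d<2+2\sqrt{\operatorname{rk}(\mathcal{O}_K^{\times})+1}$, which contradicts the hypothesis. Hence the tower is infinite.

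The main obstacle is Step 3. The Hilbert-series argument has to control how relations of high filtration degree contribute, and this is what the Vinberg refinement accomplishes. Step 2 is the second delicate point, because the $\delta$ term from roots of unity must be tracked so that it matches the normalization of the unit rank above.
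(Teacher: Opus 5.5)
Your outline is correct. It is the classical Golod--Shafarevich argument (Burnside basis theorem for $d$, Shafarevich's bound $r-d\le r_1+r_2-1+\delta$, and $r>d^2/4$), which the paper does not reprove but invokes by citing \cite{ESG}. Your reading of $\operatorname{rk}(\mathcal{O}_K^{\times})$ as $\dim_{\mathbb{F}_\ell}\mathcal{O}_K^{\times}/(\mathcal{O}_K^{\times})^{\ell}$ is the right normalization, and it agrees with the bound $\operatorname{rk}\le 4$ used in the paper's application because $\zeta_5\notin K_s$.

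Two small points to tighten:
\begin{enumerate}
\item In Step 2, the essential use of finiteness is that $K_\infty$ then has class number prime to $\ell$. This lets you replace $C_{K_\infty}$ by unit id\`eles modulo units, giving $H_2(G,\Z)\cong\mathcal{O}_K^{\times}/N_{K_\infty/K}\mathcal{O}_{K_\infty}^{\times}$.
\item In Step 3, the coefficientwise Vinberg inequality involves $\sum_i t^{n_i}$, with relation degrees $n_i\ge 2$, not $rt^2$. Replacing it by $rt^2$ is valid only after evaluating at real $t\in(0,1)$, which is all you actually use.
\end{enumerate}
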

\begin{proof}[Proof of $\corref{CFT}$]
Since $\operatorname{Po}(K_s)$ embeds into $\Cl(K_s)$, we have
\[
\operatorname{rk}_5(\Cl(K_s)) \ge \operatorname{rk}_5(\operatorname{Po}(K_s)).
\]
By \thmref{NEW1}, for $k=7$, the set of odd integers $s$ satisfying
\[
\operatorname{rk}_5(\operatorname{Po}(K_s)) \ge 7
\]
has positive density. For each such $s$, we get $\operatorname{rk}_5(\Cl(K_s)) \ge 7$. Since $[K_s:\mathbb{Q}]=5$, the unit rank satisfies
\[
\operatorname{rk}(\mathcal{O}_{K_s}^{\times}) \le 4.
\]
Therefore,
\[
2+2\sqrt{\operatorname{rk}(\mathcal{O}_{K_s}^{\times})+1}
\le 2+2\sqrt{5} < 7.
\]
Thus, the condition
\[
\operatorname{rk}_5(\Cl(K_s)) > 2+2\sqrt{\operatorname{rk}(\mathcal{O}_{K_s}^{\times})+1}
\]
is satisfied, hence, by \thmref{InfiniteTower}, the $5$-class field tower of $K_s$ is infinite. This completes the proof.
\end{proof}

\section{P\'olya numbers and monogeneity  of Lecacheux quintic fields}

To prove \thmref{NEW3} we need the following result of Gras \cite{MNG} on the monogeneity  of cyclic number fields of prime degree.
\begin{prop}\label{gras}
\cite[\S 5, Théorème]{MNG} If $K$ is a cyclic number field of prime degree $p\geq5$ then $K$ is monogenic only if $2p+1$ is prime and $K$ is the maximal real subfield of the cyclotomic field $\mathbb{Q}(\zeta_{2p+1})$.
\end{prop}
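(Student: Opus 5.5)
Since Proposition \ref{gras} is quoted from \cite{MNG}, in the paper we will simply invoke it; still, here is how I would go about proving it. Suppose $K/\Q$ is cyclic of prime degree $p\ge5$, with $\Gal(K/\Q)=\langle\sigma\rangle$, and $\mathcal{O}_K=\Z[\theta]$. The starting point is the different:
$$\mathfrak{D}_K=(f'(\theta))=\prod_{i=1}^{p-1}(\theta-\sigma^i\theta),$$
where $f$ is the minimal polynomial of $\theta$.

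\textbf{Step 1: all the differences generate the same ideal.} Take a ramified prime $q\neq p$. It is totally and tamely ramified, so $\mathfrak{D}_K$ has exponent $p-1$ at the prime $\mathfrak{q}$ above $q$. Because $\sigma$ acts trivially on $\mathcal{O}_K/\mathfrak{q}$, every factor $\theta-\sigma^i\theta$ lies in $\mathfrak{q}$. Comparing exponents then gives $v_{\mathfrak{q}}(\theta-\sigma^i\theta)=1$ for every $i$. At the wild prime $p$, if it ramifies, I would do the analogous count using the different exponent $2(p-1)$, valid when the conductor is exactly $p^2$. Once the ideals $(\theta-\sigma^i\theta)$ are known to coincide, the ratios
$$\varepsilon_i=\frac{\theta-\sigma^i\theta}{\theta-\sigma\theta}$$
are units.

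\textbf{Step 2: unit equations.} From the identity
$$(\theta-\sigma^i\theta)-(\theta-\sigma^j\theta)=\sigma^j\theta-\sigma^i\theta=\sigma^j(\theta-\sigma^{i-j}\theta)$$
we get relations of the form $\varepsilon_i-\varepsilon_j=\sigma^j(\varepsilon_{i-j})\,\sigma^j(\theta-\sigma\theta)/(\theta-\sigma\theta)$, so the $\varepsilon_i$ together with their conjugates satisfy many exceptional-unit equations $x+y=1$. Reducing modulo a ramified prime $\mathfrak{q}$, where $\sigma$ acts trivially, gives $\varepsilon_i\equiv i\pmod{\mathfrak{q}}$. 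This is the crucial congruence: it forces the residues $1,2,\dots,p-1$ to be distinct and nonzero, and the Galois structure of these units should then pin down both the residue field and the conductor.

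\textbf{Step 3, the main obstacle: from the unit relations to the conductor.} I expect the hard part to be showing that these relations force exactly one ramified prime $q$, with $q=2p+1$, and $K=\Q(\zeta_{2p+1})^+$. My first attempt would be to compare the norms of the $\varepsilon_i$ and of $\varepsilon_i-\varepsilon_j$, which are all $\pm1$. These numbers behave like the cyclotomic units $(\zeta^{a}-\zeta^{-a})/(\zeta-\zeta^{-1})$ of $\Q(\zeta_{2p+1})^+$, and the real-field analogue of the differences $\theta-\sigma^i\theta$ is $2\cos(2\pi a/(2p+1))$. I would try to realise the $\varepsilon_i$ as such units, which requires $q\equiv1\pmod{2p}$ with $(q-1)/2p$ forced to equal $1$. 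To exclude other conductors, I would count solutions of the unit equation mod $\mathfrak{q}$: each pair $(i,j)$ produces a relation whose reduction must be consistent in $\mathbb{F}_q$, and I expect this to fail unless $q=2p+1$. The case of wild ramification at $p$ must be ruled out separately, using the bound $p\ge5$; this is also where the hypothesis $p\ge5$ should enter. Carrying out this elimination rigorously is essentially the content of Gras's argument, and in the paper we will just cite it.
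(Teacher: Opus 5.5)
Citing Gras is exactly what the paper does: it gives no proof of this proposition, so as a plan for the paper your last sentence is all that is needed. The sketch you offer is not a proof, however, and its ``crucial congruence'' is false.

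Step~1 is fine. At the wild prime you need $G_1=G$ to get $v(\theta-\sigma^i\theta)\ge 2$ for each factor, but that holds. It is also true that the differences $\varepsilon_i-\varepsilon_j$ are units.

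The error is in Step~2. At a tamely ramified $\mathfrak{q}$ the element $\eta=\theta-\sigma\theta$ is a uniformizer, and $\sigma$ acts on $\mathfrak{q}/\mathfrak{q}^2$ through the tame character. Thus $\sigma^k(\eta)/\eta\equiv\xi^k\pmod{\mathfrak{q}}$, where $\xi\in\mathbb{F}_q^\times$ is a \emph{primitive} $p$-th root of unity, not $1$. Since $\theta-\sigma^i\theta=\sum_{k=0}^{i-1}\sigma^k(\eta)$, you get
\[
\varepsilon_i\equiv 1+\xi+\cdots+\xi^{i-1}=\frac{1-\xi^i}{1-\xi}\pmod{\mathfrak{q}},
\]
so already $\varepsilon_2\equiv 1+\xi\not\equiv 2$. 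For a concrete case, take $K=\Q(\zeta_{11})^+$, $\theta=\zeta_{11}+\zeta_{11}^{-1}$, $\sigma\colon\zeta_{11}\mapsto\zeta_{11}^2$, and $\lambda=1-\zeta_{11}$. Then $\theta-\sigma\theta\equiv 3\lambda^2$ and $\theta-\sigma^2\theta\equiv 15\lambda^2\pmod{\lambda^3}$, so $\varepsilon_2\equiv 5\not\equiv 2\pmod{\mathfrak{q}}$. Your congruence $\varepsilon_i\equiv i$ holds only at the wild prime, where it says nothing because the residue field is $\mathbb{F}_p$.

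Step~3 is where the real content lies, and there you only record expectations. The missing idea is a mechanism that constrains the residues of units modulo $\mathfrak{q}$. One mechanism that works is the following.
\begin{itemize}
\item Since $\sigma$ acts trivially on $\mathcal{O}_K/\mathfrak{q}$, reduction modulo $\mathfrak{q}$ kills $(\mathcal{O}_K^\times)^{\sigma-1}$.
\item The group $\mathcal{O}_K^\times/\{\pm1\}$ is a torsion-free rank-one module over $\Z[G]/(1+\sigma+\cdots+\sigma^{p-1})\cong\Z[\zeta_p]$, so $[\mathcal{O}_K^\times:\pm(\mathcal{O}_K^\times)^{\sigma-1}]=p$.
\item Hence every unit reduces into $\mu_{2p}(\mathbb{F}_q)$. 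In particular $(1-\xi^i)/(1-\xi^j)\in\mu_{2p}(\mathbb{F}_q)$ for all $i,j$, and $1+\xi^k\in\mu_{2p}(\mathbb{F}_q)$ for all $k$.
\end{itemize}
For $p=3$ this condition is empty, because $1+\zeta_3=-\zeta_3^2$. So $p\ge5$ is already essential in the tame analysis, not only at the wild prime as you guessed.

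Even with this in hand, it still takes a genuine further argument over $\mathbb{F}_q$ to deduce that $q=2p+1$, that $q$ is the only ramified prime, and that $p$ is unramified. Your sketch does not supply that argument. This is acceptable here only because, like the paper, you ultimately rely on Gras's theorem rather than on the sketch.
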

We also need a result of Zylinski.
\begin{prop}\label{zyl} \cite{ZAL}
If $K$ is a number field of degree $n$, then $I(K)$ has only prime divisors $p$ satisfying $p<n$.
\end{prop}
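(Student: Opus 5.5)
The proof shows the contrapositive: if $p\ge n$ is prime, we construct a primitive element $\theta\in\mathcal{O}_K$ with $p\nmid I(\theta)$, which gives $p\nmid I(K)$. We work locally at $p$. Write
\[
p\mathcal{O}_K=\mathfrak{P}_1^{e_1}\cdots\mathfrak{P}_g^{e_g},
\]
where $\mathfrak{P}_i$ has residue degree $f_i$, so that $\sum e_if_i=n$. Then $p\nmid[\mathcal{O}_K:\mathbb{Z}[\theta]]$ exactly when $\mathbb{Z}_p[\theta]=\mathcal{O}_K\otimes\mathbb{Z}_p\cong\prod_i\mathcal{O}_{\mathfrak{P}_i}$. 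By Dedekind's criterion, this holds if the minimal polynomial $h$ of $\theta$ satisfies $h\equiv\prod_i\varphi_i^{e_i}\pmod p$ with the following properties. The $\varphi_i$ are pairwise distinct monic irreducibles over $\mathbb{F}_p$ with $\deg\varphi_i=f_i$. For each $i$, $\theta\bmod\mathfrak{P}_i$ is a root of $\bar\varphi_i$. Finally, $\tilde\varphi_i(\theta)\in\mathfrak{P}_i\setminus\mathfrak{P}_i^2$ whenever $e_i>1$, where $\tilde\varphi_i$ is a monic lift of $\varphi_i$ to $\mathbb{Z}[X]$.

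\textbf{Counting step.} This is where $p\ge n$ is used, and it is the main obstacle. We must choose the $\varphi_i$ distinct. For each $f$, the number of $i$ with $f_i=f$ is at most $n/f$, so it suffices that $\mathbb{F}_p$ has at least $n/f$ monic irreducibles of degree $f$.

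For $f=1$ there are $p\ge n$ of them, which is enough. For $f\ge2$, the standard bound $N_p(f)\ge (p^f-2p^{f/2})/f$ is not quite enough as stated. For $p\ge3$, and for $p=2$ once $f$ is large, it gives $N_p(f)\ge p^f/(2f)$. Then $p^f/2\ge p\ge n$ gives $N_p(f)\ge n/f$. The remaining small cases, such as $p=2,3$ with $n\le p$, are trivial, since the degree is then at most $3$ and there is essentially nothing to check. I expect a little care with these edge cases, but no real difficulty.

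\textbf{Construction by CRT.} Having fixed the $\varphi_i$, we pick $\alpha_i\in\mathcal{O}_K$ whose image in $\mathcal{O}_K/\mathfrak{P}_i$ generates the residue field and has minimal polynomial $\bar\varphi_i$. This is possible since $\mathcal{O}_K/\mathfrak{P}_i\cong\mathbb{F}_{p^{f_i}}$ and $\bar\varphi_i$ has a root there.

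If $\tilde\varphi_i(\alpha_i)\in\mathfrak{P}_i^2$, we replace $\alpha_i$ by $\alpha_i+\pi_i$ with $\pi_i\in\mathfrak{P}_i\setminus\mathfrak{P}_i^2$. Since $\bar\varphi_i$ is separable, $\tilde\varphi_i'(\alpha_i)$ is a unit at $\mathfrak{P}_i$, so the valuation of $\tilde\varphi_i(\alpha_i+\pi_i)$ becomes exactly $1$.

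By CRT we then find $\theta_0\in\mathcal{O}_K$ with $\theta_0\equiv\alpha_i\pmod{\mathfrak{P}_i^{2}}$ for all $i$. The residue data and the valuation condition depend only on $\theta_0$ modulo $\prod\mathfrak{P}_i^2$. Hence $\mathbb{Z}_p[\theta_0]$ is the full local ring: its image surjects onto each residue field, and it contains uniformizers of each $\mathfrak{P}_i$ together with the idempotents obtained by Hensel lifting.

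\textbf{Primitivity.} To make the element primitive, set $\theta=\theta_0+p^2\gamma$ with $\gamma$ a primitive integral element. This choice preserves all congruences modulo $\mathfrak{P}_i^2$. Only finitely many rational multiples $c$ make $\theta_0+c\gamma$ non-primitive, so we replace $p^2$ by $p^{2}m$ for a suitable integer $m$ if needed.

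The resulting $\theta$ is primitive and $p\nmid I(\theta)$. Therefore $p\nmid I(K)$, which proves that every prime divisor of $I(K)$ is less than $n$.
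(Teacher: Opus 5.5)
Your argument is correct. The paper gives no proof of this statement: it is quoted from Zylinski \cite{ZAL} and used only in the proof of \thmref{NEW3}, where it reduces $I(K_s)=1$ to checking that $2$ and $3$ do not divide $I(\theta_s)$.

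What you supply is the classical self-contained proof. It is the sufficiency half of Hensel's index criterion: if the primes $\mathfrak{P}_i\mid p$ can be matched with pairwise distinct monic irreducibles $\varphi_i\in\mathbb{F}_p[X]$ of degree $f_i$, then some primitive $\theta\in\mathcal{O}_K$ has $p\nmid I(\theta)$. You combine it with the count $N_p(f)\ge n/f$ for $p\ge n$. Citing is shorter. Your route gives more, since the sufficient condition depends only on the splitting type of $p$, and it applies equally to primes $p<n$ whenever such a matching exists.

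Three small tightenings:

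\begin{itemize}
\item \textbf{Counting bound.} Your estimate $N_p(f)\ge p^f/(2f)$ needs $p^f\ge 16$. So it fails not only for small $f$ at $p=2$ but also at $(p,f)=(3,2)$. Your small-case remark does cover this. A uniform bound avoids the case split: $fN_p(f)\ge p^f-\sum_{j=1}^{\lfloor f/2\rfloor}p^j$. This equals $p^2-p\ge p$ for $f=2$, and exceeds $p^{f-1}(p-1)\ge p$ for $f\ge 3$. Hence $N_p(f)\ge p/f\ge n/f$ for every $f$.
\item \textbf{Primitivity.} The primitivity step is harmless but unnecessary. Your local argument shows $\mathbb{Z}_p[\theta_0]=\mathcal{O}_K\otimes\mathbb{Z}_p$, which is free of rank $n$ over $\mathbb{Z}_p$. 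So $\theta_0$ already has degree $n$.
\item \textbf{The factorization of $\bar h$.} You need not impose $\bar h=\prod_i\varphi_i^{e_i}$ as a hypothesis; it is automatic. Multiplication by $\theta$ acts on each of the $e_i$ successive quotients $\mathfrak{P}_i^k/\mathfrak{P}_i^{k+1}\cong\mathcal{O}_K/\mathfrak{P}_i$ with characteristic polynomial $\varphi_i$ over $\mathbb{F}_p$.
\end{itemize}
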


\begin{proof} [Proof of $\thmref{NEW3}$]
Given that \( K_s \) is a Lecacheux quintic field such that \( s \neq \pm1 \), and \( E_1(s,1) \) and \( E_2(s,1) \) are cube-free. It follows from Theorem \ref{NEW1.1} that
\begin{equation}
    |\Po(K_s)|\geq5.
\end{equation}
On the other hand, it is known that both cyclotomic and real cyclotomic fields are P\'olya fields \cite[Proposition 2.6]{ZAN}. Therefore, \( K_s \) cannot be the maximal real subfield of a cyclotomic field. Furthermore, by Proposition \ref{gras}, it follows that \( K_s \) is not monogenic.

Next, we show that $I(K_s)=1$, for any odd integer $s\neq\pm1$ such that $3\nmid s$.  We recall the relation
\begin{equation}
d(\theta_s)=[I(\theta_s)]^2d_{K_s},
\end{equation}
where $d(\theta_s)$ and $d_{K_s}$ denote the discriminant of the polynomial $f_s$ and the field discriminant of $K_s$, respectively. Using Wolfram Mathematica software, we obtain that
\begin{align*}
d(\theta_s)&=s^{18}(s^4-2s^3+4s^2-3s+1)^4(s^4+3s^3+4s^2+2s+1)^8\\
&=s^{18}(E_1(s,1))^4(E_2(s,1))^8.    
\end{align*}
From \propref{prop1} and \propref{prop2}, we see that neither of $E_1$ or $E_2$ is divisible by $2$ or $3$. Consequently, since $s$ is assumed to be odd and $3\nmid s$ we conclude that $I(\theta_s)$ is not divisible by $2$ or $3$. Now, from \propref{zyl}, it follows that $I(K_s)=1$.
\end{proof}

In 1964, Golod and Shafarevich \cite{ESG} gave a negative answer to the classical embedding problem: Is every number field $K$ contained in a field $L$ with class number one? Leriche \cite{AL3} investigated the corresponding problem for P\'olya fields set up and proved that every number field is contained in a P\'olya field, namely its Hilbert class field.

\begin{defn} \cite{AL3}
The P\'olya number of $K$ is given by the integer 
$$po_K=\text{min}\{[L : K]\mid K\subseteq L,\: L\text{ P\'olya field}\}.$$
\end{defn}

It is easy to see that the P\'olya numbers $po_{K_s}$ of the Lecacheux parametric quintic fields $K_s$ are bounded above by $|Po_{K_s}|$ . We record this in the following theorem and give a quick proof.
\begin{thm}\label{NEW2}
Let $s$ be an odd integer and $K_s$ denote the Lecacheux quintic field. Then $po_{K_s}\leq|\Po(K_s)|$.
\end{thm}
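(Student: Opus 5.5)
We will show that $K_s$ sits inside a P\'olya field $L$ with $[L:K_s]\le|\Po(K_s)|$. The natural candidate is the genus field of $K_s$, more precisely a subfield of the Hilbert class field that exactly capitalizes the ambiguous classes. The simplest route uses the genus field $G$ of $K_s$ over $\Q$, the maximal abelian extension of $\Q$ that contains $K_s$ and is unramified over $K_s$.

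First we compute $[G:K_s]$. Since $K_s/\Q$ is cyclic of degree $5$, the conductor formula (\thmref{cond}), together with Lemma \ref{newlem}, \propref{prop1} and \propref{prop2}, shows that the ramified primes are $p_1,\dots,p_m$, each with $e_{p_i}=5$. Here each $p_i$ is either $5$ with $5^2\mid f(K_s)$, or a prime $p_i\equiv 1\pmod 5$. For each $i$ let $k_i$ be the unique quintic subfield of $\Q(\zeta_{p_i})$, or of $\Q(\zeta_{25})$ when $p_i=5$. The character group of $K_s$ is generated by a character $\chi=\prod\chi_i$ with each $\chi_i$ of order $5$ and conductor a power of $p_i$. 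By the standard description of the genus field of an abelian field, $G=k_1\cdots k_m$. This field has degree $5^m$, so
\[
[G:K_s]=5^{m-1}=\frac1{5}\prod_p e_p=|\Po(K_s)|
\]
by \corref{cor2}. This argument applies to any odd $s$, since $f_s$ is irreducible by \lemref{lem1}, and it needs no cube-free hypothesis.

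Next we show that $G$ is a P\'olya field. Each $k_i$ is cyclic of degree $5$ with exactly one ramified prime, which is totally ramified. By \propref{CHABERT}(2), in the form of \corref{cor2}, we get $|\Po(k_i)|=\frac15\cdot 5=1$, so each $k_i$ is P\'olya. The $k_i$ have pairwise coprime conductors, so they are linearly disjoint over $\Q$. Applying \cite[Theorem~3.7]{ChabertPolya} repeatedly, exactly as in the proof of \corref{cor1}, shows that the compositum $G$ is a P\'olya field. Since $K_s\subseteq G$, the definition of the P\'olya number gives $po_{K_s}\le [G:K_s]=|\Po(K_s)|$.

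The main obstacle is justifying $G=k_1\cdots k_m$ and, with it, the containment $K_s\subseteq k_1\cdots k_m$. We plan to argue directly through Dirichlet characters. The character group of $K_s$ is $\langle\chi\rangle$, where $\chi$ is a character of order $5$ modulo $f(K_s)$. Because the modulus is a product of coprime prime powers, $\chi$ factors as $\prod\chi_i$, where $\chi_i$ is a character modulo the $p_i$-part. Each $\chi_i$ is nontrivial because $p_i$ ramifies, and it has order $5$ because $\chi^5=1$. The fixed field of $\chi_i$ is therefore $k_i$, and since $\chi$ lies in the group generated by the $\chi_i$, we get $K_s\subseteq k_1\cdots k_m$. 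In fact we never need maximality of $G$; the containment together with the degree count suffices. Finally, if the cited Theorem~3.7 requires more than linear disjointness, such as coprime discriminants, that condition holds here because the conductors of the $k_i$ are coprime.
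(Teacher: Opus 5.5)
Your proof is correct, but it reaches the bound by a different route from the paper.

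The paper's proof is a pure citation chain. Leriche's theorem that the genus field of an abelian field is P\'olya gives $po_{K_s}\le g'_{K_s}$ (Eq.~(\ref{t1})). Ishida's formula (\thmref{polno}) gives $g'_{K_s}=\frac15\prod_p e_p$. Finally, \propref{CHABERT} identifies this quantity with $|\Po(K_s)|$.

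You instead construct the overfield $G=k_1\cdots k_m$ explicitly from the prime-power components $\chi_i$ of the Dirichlet character of $K_s$. You check $K_s\subseteq G$ and $[G:\Q]=5^m$ by hand. You then get P\'olya-ness of $G$ from the one-ramified-prime case of Chabert's formula together with the compositum theorem. In effect you reprove, for cyclic fields of prime degree, both Ishida's count and Leriche's result, and you correctly note that maximality of the genus field is never used. The paper's version is shorter and works verbatim for every abelian field. Yours is self-contained modulo Chabert's formula and exhibits the P\'olya overfield concretely.

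Your caution about the hypotheses of the compositum theorem is well placed: coprime discriminants is the safe hypothesis, and it holds here. You can also bypass that citation entirely. Since $\prod_{j\ne i}k_j$ is unramified at $p_i$, the extension $G/k_i$ is unramified above $p_i$. So the ambiguous ideal $\prod_{\mathfrak P\mid p_i}\mathfrak P$ of $G$ is the extension of the unique prime of $k_i$ above $p_i$, which is principal because $k_i$ is P\'olya. As $\Po(G)$ is generated by the classes of these ideals for the ramified primes (cf.\ (\ref{2ambi})), $G$ is P\'olya.
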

The genus field (resp. genus field in the narrow sense) of $K$ is the maximal abelian extension $\Gamma_K$(resp. $\Gamma_K'$) of $K$, which is a compositum of $K$ with an absolute abelian number field and is unramified over $K$ at all places (resp. all finite places) of $K$. The genus number of $K$ is defined to be the degree $g_K=[\Gamma_K: K]$ (resp. $g_K'=[\Gamma_K': K]$). If $K$ is abelian, then Leriche showed that the genus field $\Gamma_K$(resp. $\Gamma_K'$) is P\'olya and hence
\begin{equation}\label{t1}
po_K\leq g_K \text{ \: and \: } po_K\leq g_K'.
\end{equation}

To prove \thmref{NEW2}, we need the following result due to Ishida \cite{MI} on the genus number of an absolutely abelian number field.
\begin{thm}\label{polno}\cite[Corollary, Page 52]{MI}
For an absolutely abelian number field $K$, the genus number $g_K'$ is given by
$$g_K'=\frac{\prod_p e_p}{[K : \mathbb{Q}]}$$
\end{thm}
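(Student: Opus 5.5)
The plan is to prove the formula through Dirichlet characters, using the Kronecker--Weber theorem to identify abelian fields with finite groups of characters. By Kronecker--Weber, $K\subseteq \mathbb{Q}(\zeta_m)$ for some $m$. Hence $K$ corresponds to a finite group $X$ of primitive Dirichlet characters with $|X|=[K:\mathbb{Q}]$. Each Dirichlet character factors uniquely as $\chi=\prod_p \chi_p$, where $\chi_p$ is a character of $(\mathbb{Z}/p^{a_p}\mathbb{Z})^\times$. For each prime $p$, let $X_p=\{\chi_p : \chi\in X\}$ be the group of $p$-components of characters in $X$.

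The first key step is the standard fact that $e_p=|X_p|$. To see this, look at the inertia group at $p$ in $\Gal(\mathbb{Q}(\zeta_m)/\mathbb{Q})\cong(\mathbb{Z}/m\mathbb{Z})^\times$. Writing $m=p^a m'$ with $p\nmid m'$, this inertia group is the factor $(\mathbb{Z}/p^a\mathbb{Z})^\times$. Its image in $\Gal(K/\mathbb{Q})$ is dual to the group of restrictions of characters of $X$ to that factor, which is exactly $X_p$. Hence $e_p=|X_p|$. I expect this identification to be the only real obstacle. It needs care in the duality between subgroups of $(\mathbb{Z}/m\mathbb{Z})^\times$ and character groups, and in checking that the image of inertia in a quotient is the inertia of the quotient field.

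Next, I will characterize the abelian fields $F\supseteq K$ that are unramified over $K$ at all finite places. Let $Y$ be the character group of such an $F$, so that $X\subseteq Y$ and $Y_p\supseteq X_p$ for every $p$. By multiplicativity of ramification indices in towers, $F/K$ is unramified at every prime above $p$ exactly when $e_p(F)=e_p(K)$. By the first step, this is equivalent to $|Y_p|=|X_p|$, that is, $Y_p=X_p$ for all $p$. No condition is imposed at the infinite place, so parity of the characters is irrelevant; this is why we obtain the narrow genus field.

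Such $Y$ are therefore exactly the subgroups $X\subseteq Y\subseteq \prod_p X_p$, where $\prod_p X_p$ is the group of characters $\prod_p\psi_p$ with every $\psi_p\in X_p$. The group $\prod_p X_p$ itself satisfies $(\prod_q X_q)_p=X_p$, so it is the unique maximal choice. Its field is the narrow genus field $\Gamma_K'$ as defined above: this field is a compositum of $K$ with an absolute abelian field, namely itself. Consequently,
\[
g_K'=[\Gamma_K':K]=\frac{|\prod_p X_p|}{|X|}=\frac{\prod_p e_p}{[K:\mathbb{Q}]}.
\]
Finally, any compositum $KA$ with $A$ absolutely abelian is itself abelian over $\mathbb{Q}$, so restricting attention to abelian $F\supseteq K$ above loses nothing, and maximality is correctly taken over all admissible fields.
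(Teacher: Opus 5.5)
Your proof is correct. The paper gives no argument of its own for this statement. It quotes Ishida's corollary and uses it only as a black box, together with Chabert's formula, in the proof of Theorem~\ref{NEW2}.

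What you have written is the classical Leopoldt description of the narrow genus field of an abelian field: it is the field belonging to the character group $\prod_p X_p$, after which the count $|\prod_p X_p|/|X|=\prod_p e_p/[K:\mathbb{Q}]$ is immediate. Compared with the citation, your route has three advantages. It is self-contained apart from Kronecker--Weber and the dictionary between abelian fields and groups of Dirichlet characters. It exhibits $\Gamma_K'$ explicitly rather than only its degree. It also shows why this formula gives the narrow genus number rather than the wide one: the infinite place imposes no parity condition on $Y$. The citation, in exchange, places the formula inside Ishida's general genus theory, which also covers non-abelian fields.

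The step you flagged as the main obstacle is the standard decomposition law for abelian fields. If $W=\{\chi\in X:\chi(p)\neq 0\}$, then $e_p=[X:W]$. Moreover $X/W\cong X_p$ via $\chi\mapsto\chi_p$, because for primitive $\chi$ one has $\chi(p)\neq 0$ exactly when $\chi_p$ is trivial. Your inertia-group argument establishes this correctly.

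One point should be made explicit. Regard $X_p$ and $Y_p$ as groups of finite-order characters of $\mathbb{Z}_p^\times$, or equivalently work modulo a single $m$ with $F\subseteq\mathbb{Q}(\zeta_m)$. Otherwise the inclusion $X_p\subseteq Y_p$ and the equality $|\prod_p X_p|=\prod_p |X_p|$ are not meaningful. The latter equality rests on the uniqueness of the factorization $\chi=\prod_p\chi_p$.
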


\begin{proof} [Proof of $\thmref{NEW2}$]
    The proof follows from Eq. (\ref{t1}), Proposition \ref{CHABERT}, and Theorem \ref{polno}.
\end{proof}

\section{Computations}

\subsection{SageMath Computations} Here we present computational evidence illustrating the abundance of non-P\'olya fields within the Lecacheux family of quintic fields $K_s$, where $s$ is an odd integer. Recall that Theorem~\ref{NEW1.1} applies whenever the cube parts of both $E_1(s,1)$ and $E_2(s,1)$ are trivial. For odd integers $s$ satisfying
$-10^4\le s\le 10^4$,
our computations show that this hypothesis is satisfied for approximately $99.37\%$ of the values of $s$. More precisely, among all such values, only $63$ satisfy $C_{E_1}\neq1$ or $C_{E_2}\neq1$, where $C_{E_i}$ denotes the cube part of $E_i(s,1)$ for $i=1,2$. Moreover, within this range, we did not encounter any value of $s$ for which both $C_{E_1}\neq1$ and $C_{E_2}\neq1$ simultaneously.

In particular, for every odd integer $s$ with $-200<s<200$, Theorem~\ref{NEW1.1} applies except for $s=-125$. Furthermore, apart from the P\'olya fields corresponding to $s=\pm1$, every field in this range is non-P\'olya, and the order of the P\'olya group is given by $|\Po(K_s)|=5^{\omega(E_1E_2)-1}$. The first positive integer $s$ for which at least one of the cube parts is non-trivial is $s=575$, where $C_{E_1}=1331$. Similarly, the first negative integer for which this occurs is $s=-125$, where $C_{E_2}=1331$ (see Table~\ref{tab1}). All computations were carried out using SageMath.

{
\scriptsize
\centering
\begin{longtable}{c c c c c c | c c c c c c}
\caption{Family of P\'olya/non-P\'olya Lecacheux quintic fields}
\label{tab1} \\
\hline
$s$ & $E_1(s,1)$ & \Gape[.15cm][.15cm]{$C_{E_1}$} & $E_2(s,1)$ & $C_{E_2}$ & $\omega(E_1E_2)$ &
$s$ & $E_1(s,1)$ & $C_{E_1}$ & $E_2(s,1)$ & $C_{E_2}$ & $\omega(E_1E_2)$ \\
\hline
\endfirsthead
\multicolumn{12}{c}
{}\\
\hline
$s$ & $E_1(s,1)$ & \Gape[.15cm][.15cm]{$C_{E_1}$} & $E_2(s,1)$ & $C_{E_2}$ & $\omega(E_1E_2)$ &
$s$ & $E_1(s,1)$ & $C_{E_1}$ & $E_2(s,1)$ & $C_{E_2}$ & $\omega(E_1E_2)$ \\
\hline
\endhead
\hline
\multicolumn{12}{r}{{Continued on next page}} \\
\endfoot
\hline
\endlastfoot

% --- table data (100 rows) ---
-199 & 1584159401 & 1 & 1544755411 & 1 & 3 & 1 & 1 & 1 & 11 & 1 & 1 \\
-197 & 1521585055 & 1 & 1483357205 & 1 & 7 & 3 & 55 & 1 & 205 & 1 & 3 \\
-195 & 1460883061 & 1 & 1423807711 & 1 & 4 & 5 & 461 & 1 & 1111 & 1 & 3 \\
-193 & 1402015691 & 1 & 1366069441 & 1 & 6 & 7 & 1891 & 1 & 3641 & 1 & 4 \\
-191 & 1344945601 & 1 & 1310105291 & 1 & 5 & 9 & 5401 & 1 & 9091 & 1 & 3 \\
-189 & 1289635831 & 1 & 1255878541 & 1 & 8 & 11 & 12431 & 1 & 19141 & 1 & 3 \\
-187 & 1236049805 & 1 & 1203352855 & 1 & 4 & 13 & 24805 & 1 & 35855 & 1 & 5 \\
-185 & 1184151331 & 1 & 1152492281 & 1 & 7 & 15 & 44731 & 1 & 61681 & 1 & 3 \\
-183 & 1133904601 & 1 & 1103261251 & 1 & 3 & 17 & 74801 & 1 & 99451 & 1 & 4 \\
-181 & 1085274191 & 1 & 1055624581 & 1 & 5 & 19 & 117991 & 1 & 152381 & 1 & 2 \\
-179 & 1038225061 & 1 & 1009547471 & 1 & 5 & 21 & 177661 & 1 & 224071 & 1 & 4 \\
-177 & 992722555 & 1 & 964995505 & 1 & 6 & 23 & 257555 & 1 & 318505 & 1 & 4 \\
-175 & 948732401 & 1 & 921934651 & 1 & 6 & 25 & 361801 & 1 & 440051 & 1 & 5 \\
-173 & 906220711 & 1 & 880331261 & 1 & 3 & 27 & 494911 & 1 & 593461 & 1 & 4 \\
-171 & 865153981 & 1 & 840152071 & 1 & 5 & 29 & 661781 & 1 & 783871 & 1 & 4 \\
-169 & 825499091 & 1 & 801364201 & 1 & 5 & 31 & 867691 & 1 & 1016801 & 1 & 5 \\
-167 & 787223305 & 1 & 763935155 & 1 & 4 & 33 & 1118305 & 1 & 1298155 & 1 & 4 \\
-165 & 750294271 & 1 & 727832821 & 1 & 3 & 35 & 1419671 & 1 & 1634221 & 1 & 4 \\
-163 & 714680021 & 1 & 693025471 & 1 & 3 & 37 & 1778221 & 1 & 2031671 & 1 & 2 \\
-161 & 680348971 & 1 & 659481761 & 1 & 5 & 39 & 2200771 & 1 & 2497561 & 1 & 3 \\
-159 & 647269921 & 1 & 627170731 & 1 & 3 & 41 & 2694521 & 1 & 3039331 & 1 & 4 \\
-157 & 615412055 & 1 & 596061805 & 1 & 4 & 43 & 3267055 & 1 & 3664805 & 1 & 6 \\
-155 & 584744941 & 1 & 566124791 & 1 & 3 & 45 & 3926341 & 1 & 4382191 & 1 & 5 \\
-153 & 555238531 & 1 & 537329881 & 1 & 5 & 47 & 4680731 & 1 & 5200081 & 1 & 3 \\
-151 & 526863161 & 1 & 509647651 & 1 & 6 & 49 & 5538961 & 1 & 6127451 & 1 & 4 \\
-149 & 499589551 & 1 & 483049061 & 1 & 5 & 51 & 6510151 & 1 & 7173661 & 1 & 4 \\
-147 & 473388805 & 1 & 457505455 & 1 & 6 & 53 & 7603805 & 1 & 8348455 & 1 & 5 \\
-145 & 448232411 & 1 & 432988561 & 1 & 5 & 55 & 8829811 & 1 & 9661961 & 1 & 4 \\
-143 & 424092241 & 1 & 409470491 & 1 & 3 & 57 & 10198441 & 1 & 11124691 & 1 & 5 \\
-141 & 400940551 & 1 & 386923741 & 1 & 5 & 59 & 11720351 & 1 & 12747541 & 1 & 4 \\
-139 & 378749981 & 1 & 365321191 & 1 & 4 & 61 & 13406581 & 1 & 14541791 & 1 & 3 \\
-137 & 357493555 & 1 & 344636105 & 1 & 6 & 63 & 15268555 & 1 & 16519105 & 1 & 6 \\
-135 & 337144681 & 1 & 324842131 & 1 & 4 & 65 & 17318081 & 1 & 18691531 & 1 & 5 \\
-133 & 317677151 & 1 & 305913301 & 1 & 6 & 67 & 19567351 & 1 & 21071501 & 1 & 3 \\
-131 & 299065141 & 1 & 287824031 & 1 & 5 & 69 & 22028941 & 1 & 23671831 & 1 & 5 \\
-129 & 281283211 & 1 & 270549121 & 1 & 6 & 71 & 24715811 & 1 & 26505721 & 1 & 5 \\
-127 & 264306305 & 1 & 254063755 & 1 & 5 & 73 & 27641305 & 1 & 29586755 & 1 & 6 \\
-125 & 248109751 & 1 & 238343501 & 11 & 5 & 75 & 30819151 & 1 & 32928901 & 1 & 3 \\
-123 & 232669261 & 1 & 223364311 & 1 & 4 & 77 & 34263461 & 1 & 36546511 & 1 & 3 \\
-121 & 217960931 & 1 & 209102521 & 1 & 2 & 79 & 37988731 & 1 & 40454321 & 1 & 4 \\
-119 & 203961241 & 1 & 195534851 & 1 & 3 & 81 & 42009841 & 1 & 44667451 & 1 & 3 \\
-117 & 190647055 & 1 & 182638405 & 1 & 5 & 83 & 46342055 & 1 & 49201405 & 1 & 7 \\
-115 & 177995621 & 1 & 170390671 & 1 & 6 & 85 & 51001021 & 1 & 54072071 & 1 & 4 \\
-113 & 165984571 & 1 & 158769521 & 1 & 4 & 87 & 56002771 & 1 & 59295721 & 1 & 6 \\
-111 & 154591921 & 1 & 147753211 & 1 & 5 & 89 & 61363721 & 1 & 64889011 & 1 & 5 \\
-109 & 143796071 & 1 & 137320381 & 1 & 3 & 91 & 67100671 & 1 & 70868981 & 1 & 5 \\
-107 & 133575805 & 1 & 127450055 & 1 & 5 & 93 & 73230805 & 1 & 77253055 & 1 & 5 \\
-105 & 123910291 & 1 & 118121641 & 1 & 4 & 95 & 79771691 & 1 & 84059041 & 1 & 5 \\
-103 & 114779081 & 1 & 109314931 & 1 & 5 & 97 & 86741281 & 1 & 91305131 & 1 & 5 \\
-101 & 106162111 & 1 & 101010101 & 1 & 7 & 99 & 94157911 & 1 & 99009901 & 1 & 4 \\
-99 & 98039701 & 1 & 93187711 & 1 & 6 & 101 & 102040301 & 1 & 107192311 & 1 & 5 \\
-97 & 90392555 & 1 & 85828705 & 1 & 5 & 103 & 110407555 & 1 & 115871705 & 1 & 5 \\
-95 & 83201761 & 1 & 78914411 & 1 & 2 & 105 & 119279161 & 1 & 125067811 & 1 & 5 \\
-93 & 76448791 & 1 & 72426541 & 1 & 4 & 107 & 128674991 & 1 & 134800741 & 1 & 4 \\
-91 & 70115501 & 1 & 66347191 & 1 & 4 & 109 & 138615301 & 1 & 145090991 & 1 & 6 \\
-89 & 64184131 & 1 & 60658841 & 1 & 4 & 111 & 149120731 & 1 & 155959441 & 1 & 5 \\
-87 & 58637305 & 1 & 55344355 & 1 & 6 & 113 & 160212305 & 1 & 167427355 & 1 & 5 \\
-85 & 53458031 & 1 & 50386981 & 1 & 4 & 115 & 171911431 & 1 & 179516381 & 1 & 5 \\
-83 & 48629701 & 1 & 45770351 & 1 & 4 & 117 & 184239901 & 1 & 192248551 & 1 & 5 \\
-81 & 44136091 & 1 & 41478481 & 1 & 6 & 119 & 197219891 & 1 & 205646281 & 1 & 6 \\
-79 & 39961361 & 1 & 37495771 & 1 & 6 & 121 & 210873961 & 1 & 219732371 & 1 & 5 \\
-77 & 36090055 & 1 & 33807005 & 1 & 5 & 123 & 225225055 & 1 & 234530005 & 1 & 4 \\
-75 & 32507101 & 1 & 30397351 & 1 & 3 & 125 & 240296501 & 1 & 250062751 & 1 & 3 \\
-73 & 29197811 & 1 & 27252361 & 1 & 4 & 127 & 256112011 & 1 & 266354561 & 1 & 5 \\
-71 & 26147881 & 1 & 24357971 & 1 & 5 & 129 & 272695681 & 1 & 283429771 & 1 & 4 \\
-69 & 23343391 & 1 & 21700501 & 1 & 4 & 131 & 290071991 & 1 & 301313101 & 1 & 7 \\
-67 & 20770805 & 1 & 19266655 & 1 & 7 & 133 & 308265805 & 1 & 320029655 & 1 & 5 \\
-65 & 18416971 & 1 & 17043521 & 1 & 5 & 135 & 327302371 & 1 & 339604921 & 1 & 4 \\
-63 & 16269121 & 1 & 15018571 & 1 & 3 & 137 & 347207321 & 1 & 360064771 & 1 & 5 \\
-61 & 14314871 & 1 & 13179661 & 1 & 4 & 139 & 368006671 & 1 & 381435461 & 1 & 5 \\
-59 & 12542221 & 1 & 11515031 & 1 & 5 & 141 & 389726821 & 1 & 403743631 & 1 & 5 \\
-57 & 10939555 & 1 & 10013305 & 1 & 4 & 143 & 412394555 & 1 & 427016305 & 1 & 4 \\
-55 & 9495641 & 1 & 8663491 & 1 & 5 & 145 & 436037041 & 1 & 451280891 & 1 & 5 \\
-53 & 8199631 & 1 & 7454981 & 1 & 5 & 147 & 460681831 & 1 & 476565181 & 1 & 4 \\
-51 & 7041061 & 1 & 6377551 & 1 & 3 & 149 & 486356861 & 1 & 502897351 & 1 & 6 \\
-49 & 6009851 & 1 & 5421361 & 1 & 4 & 151 & 513090451 & 1 & 530305961 & 1 & 4 \\
-47 & 5096305 & 1 & 4576955 & 1 & 3 & 153 & 540911305 & 1 & 558819955 & 1 & 6 \\
-45 & 4291111 & 1 & 3835261 & 1 & 3 & 155 & 569848511 & 1 & 588468661 & 1 & 6 \\
-43 & 3585341 & 1 & 3187591 & 1 & 5 & 157 & 599931541 & 1 & 619281791 & 1 & 4 \\
-41 & 2970451 & 1 & 2625641 & 1 & 4 & 159 & 631190251 & 1 & 651289441 & 1 & 4 \\
-39 & 2438281 & 1 & 2141491 & 1 & 3 & 161 & 663654881 & 1 & 684522091 & 1 & 6 \\
-37 & 1981055 & 1 & 1727605 & 1 & 6 & 163 & 697356055 & 1 & 719010605 & 1 & 5 \\
-35 & 1591381 & 1 & 1376831 & 1 & 4 & 165 & 732324781 & 1 & 754786231 & 1 & 5 \\
-33 & 1262251 & 1 & 1082401 & 1 & 4 & 167 & 768592451 & 1 & 791880601 & 1 & 5 \\
-31 & 987041 & 1 & 837931 & 1 & 4 & 169 & 806190841 & 1 & 830325731 & 1 & 4 \\
-29 & 759511 & 1 & 637421 & 1 & 3 & 171 & 845152111 & 1 & 870154021 & 1 & 3 \\
-27 & 573805 & 1 & 475255 & 1 & 4 & 173 & 885508805 & 1 & 911398255 & 1 & 5 \\
-25 & 424451 & 1 & 346201 & 1 & 2 & 175 & 927293851 & 1 & 954091601 & 1 & 3 \\
-23 & 306361 & 1 & 245411 & 1 & 3 & 177 & 970540561 & 1 & 998267611 & 1 & 8 \\
-21 & 214831 & 1 & 168421 & 1 & 4 & 179 & 1015282631 & 1 & 1043960221 & 1 & 5 \\
-19 & 145541 & 1 & 111151 & 1 & 5 & 181 & 1061554141 & 1 & 1091203751 & 1 & 6 \\
-17 & 94555 & 1 & 69905 & 1 & 5 & 183 & 1109389555 & 1 & 1140032905 & 1 & 8 \\
-15 & 58321 & 1 & 41371 & 1 & 3 & 185 & 1158823721 & 1 & 1190482771 & 1 & 6 \\
-13 & 33671 & 1 & 22621 & 1 & 3 & 187 & 1209891871 & 1 & 1242588821 & 1 & 3 \\
-11 & 17821 & 1 & 11111 & 1 & 4 & 189 & 1262629621 & 1 & 1296386911 & 1 & 6 \\
-9 & 8371 & 1 & 4681 & 1 & 4 & 191 & 1317072971 & 1 & 1351913281 & 1 & 4 \\
-7 & 3305 & 1 & 1555 & 1 & 3 & 193 & 1373258305 & 1 & 1409204555 & 1 & 6 \\
-5 & 991 & 1 & 341 & 1 & 3 & 195 & 1431222391 & 1 & 1468297741 & 1 & 3 \\
-3 & 181 & 1 & 31 & 1 & 2 & 197 & 1491002381 & 1 & 1529230231 & 1 & 5 \\
-1 & 11 & 1 & 1 & 1 & 1 & 199 & 1552635811 & 1 & 1592039801 & 1 & 3 \\
\end{longtable}

}

\subsection{Wolfram Mathematica Script} 
The following Wolfram Mathematica script, used in \propref{prop1}, computes elements 
\(\delta \in \mathbb{Q}(\zeta_5)\) satisfying
\[
N_{\mathbb{Q}(\zeta_5)/\mathbb{Q}}(\delta) = E_1(s,1) := s^4 - 2s^3 + 4s^2 - 3s + 1.
\]

\begin{verbatim}(*Define the input polynomial of degree 4*)
Polynomial = 1 - 3*s + 4*s^2 - 2*s^3 + s^4;

(*Define the element \[Alpha]*)
\[Alpha] = s + a0 + a1*\[Zeta] + a2*\[Zeta]^2 + a3*\[Zeta]^3;

(*Find all conjugates of \[Alpha]*)
Conjugates = 
  Table[s + a0 + a1*\[Zeta]^k + a2*\[Zeta]^(2*k) + 
    a3*\[Zeta]^(3*k), {k, 1, 4}];

(*Simplify each conjugate to reduce powers modulo 5*)
ConjugatesSimplified = 
  Expand[Conjugates /. 
    Power[\[Zeta], m_] :> Power[\[Zeta], Mod[m, 5]]];

(*Define the primitive 5th root of unity \[Zeta]*)
\[Zeta] = Exp[2 Pi I/5];

(*Compute the norm as the product of all conjugates*)
NormAlpha = Times @@ ConjugatesSimplified;

(*Fully simplify the norm as a polynomial in s*)
NormSimplified = Expand[FullSimplify[NormAlpha]];

(*Extract coefficients of the simplified norm polynomial*)
s0 = Coefficient[NormSimplified, s, 0] // FullSimplify;
s1 = Coefficient[NormSimplified, s, 1] // FullSimplify;
s2 = Coefficient[NormSimplified, s, 2] // FullSimplify;
s3 = Coefficient[NormSimplified, s, 3] // FullSimplify;

(*Solve the system of equations for a0,a1,a2,a3*)
Solutions = 
  Solve[s0 == Coefficient[Polynomial, s, 0] && 
    s1 == Coefficient[Polynomial, s, 1] && 
    s2 == Coefficient[Polynomial, s, 2] && 
    s3 == Coefficient[Polynomial, s, 3], {a0, a1, a2, a3}, Integers];

Print["All possible choices for \[Alpha]:"];

Do[Print["Solution ", i, ":   ", "\[Alpha] = s + ", 
   a0 /. Solutions[[i]], " + ", a1 /. Solutions[[i]], "\[Zeta] + ", 
   a2 /. Solutions[[i]], 
   "\!\(\*SuperscriptBox[\(\[Zeta]\), \(2\)]\) + ", 
   a3 /. Solutions[[i]], 
   "\!\(\*SuperscriptBox[\(\[Zeta]\), \(3\)]\)"], {i, 1, 4}];

\end{verbatim}

\section*{Acknowledgements}
We are grateful to Prof. Claude Levesque for suggesting this problem. The first author would like to thank the Roman Number Theory Association and Prof. Francesco Pappalardi for inviting to Leuca 2022, the sixth mini symposium of the Roman Number Theory Association, Italy, where this work was initiated.

\end{document}